\newtheorem{thm}{Theorem}[section]
\newtheorem{cor}[thm]{Corollary}
\newtheorem{lem}[thm]{Lemma}
\newtheorem{prop}[thm]{Proposition}
\theoremstyle{definition}
\theoremstyle{remark}
\newtheorem{conj}[thm]{Conjecture}
\numberwithin{equation}{section}
\newcommand*\cube{\mbox{\mancube}}
\newcommand*\phih{\widehat{\phi}}
\newcommand*{\infint}{\int_{-\infty}^{\infty}}
\newcommand{\alignfootnote}[1]{%
	\ifmeasuring@
	\else
	\footnote{#1}%
	\fi
}
\begin{document}
\sloppy 
\title{Low-lying zeros of cubic Dirichlet $L$-functions and the Ratios Conjecture}

\author[Peter J. Cho]{Peter J. Cho$^{\dagger}$}
\thanks{$^{\dagger}$ This work was supported by Basic Science Research Program through the National Research Foundation of Korea(NRF) funded by the Ministry of Education(2016R1D1A1B03935186).}
\address{Department of Mathematical Sciences, Ulsan National Institute of Science and Technology, Ulsan, Korea}
\email{petercho@unist.ac.kr}

\author[Jeongho Park]{Jeongho Park$^{\star}$}
\thanks{$^{\star}$ The author was supported by the National Research Foundation of Korea(NRF) funded by the Ministry of Education, under the Basic Science Research Program(2017R1D1A1B03028670).}
\address{Department of Mathematical Sciences, Ulsan National Institute of Science and Technology, Ulsan, Korea}
\email{pkskng@unist.ac.kr}

\keywords{cubic characters, one-level density, Ratios conjecture}
\begin{abstract} We compute the one-level density for the family of cubic Dirichlet $L$-functions when the support of the Fourier transform of the associated test function is in $(-1,1)$. We also establish the Ratios conjecture prediction for the one-level density for this family, and confirm that it agrees with the one-level density we obtain. 
\end{abstract}
\subjclass{}

\maketitle

\section{Introduction}

After the monumental work of Montgomery \cite{MONT}, number theorists have tried to understand the zeros of automorphic $L$-functions via random matrix theory. Katz and Sarnak \cite{KS} proposed a conjecture, which claims that the distributions of low-lying zeros of the $L$-functions in a family $\frak{F}$ is governed by its corresponding symmetry type $G(\frak{F})$. We refer to \cite{KS2} as a kind introduction to the conjecture. For various families, the conjecture has been tested and all the results have supported it. Since it is impossible to give a complete list, we name just a few of them \cite{AM, Yoonbok2017, CK-imrn2015, ILS, Miller, Rub, Young}. Those who are interested in this problem may take a look at the references therein. 
 However, it seems out of reach to prove the conjecture fully. In this sense, it is meaningful to investigate the distributions of low-lying zeros of $L$-functions in a new family even if the result is limited. 

In this work, we study the low-lying zeros of cubic Dirichlet $L$-functions. Let $\phi$ be an even Schwartz function whose Fourier transform is compactly supported. For a cubic Dirichlet character $\chi$, let $\rho$ denote the nontrivial zeros of $L(\chi,s)$ in the critical strip. Define $$D_X(\chi;\phi) = \sum_{\gamma} \phi\left(\gamma \frac{L}{2\pi}\right),$$ where $\gamma = -i(\rho - 1/2)$ and $L=\log \left( \frac{X}{2\pi e} \right)$. Let $w(t)$ be an even Schwartz function that is nonnegative and is nonzero; in particular, at $s=1$, its Mellin transform $$\mathfrak{w}(s) = \int_0^{\infty} w(t) t^{s-1} dt$$ is positive and is differentiable. Let $\omega = \frac{-1+\sqrt{-3}}{2}$. The total weight is defined by $$W^{\ast}(X) = \sum_{\chi}^{\ast} w(q/X) = \sum_{\alpha}''' w\left(\frac{N(\alpha)}{X}\right)$$ where $\sum_{\chi}^{\ast} = \sum_{\alpha}'''$\index{$\sum_{\chi}^{\ast} = \sum_{\alpha}'''$} is the sum over primitive cubic characters $\chi$ parameterized by $\alpha \in \mathbb{Z}[\omega]$, $\alpha \equiv 1$ mod $3$, $\alpha$ is square-free and has no rational prime divisor as in \cite[Lemma 2.1]{BaierandYoung2010}, and $q = q(\chi) =  N(\alpha) > 1$ is the conductor of $\chi$. The 1-level density we are interested in is $$\mathcal{D}^{\ast}(\phi;X) = \frac{1}{W^{\ast}(X)} \sum_{\chi}^{\ast} w\left(\frac{q}{X}\right) D_X(\chi;\phi).$$

In this work we reserve $\sigma$ for $\sup(supp(\phih))$, and $\sum_{\alpha}$ will denote the sum over $\alpha \in \mathbb{Z}[\omega]$. $k,m,n,\ell$ denote natural numbers, and $p$ represents a rational prime. Throughout this paper, we let $$a(n) = \prod_{p|n} a(p),\quad \text{where}\quad a(p) = \begin{cases}
\frac{p}{p+2}\;\; &\text{ if $p \equiv 1$ mod $3$,}\\
1\;\;&\text{ otherwise.}
\end{cases}$$

First, we compute the one-level density $ \mathcal{D}^{\ast}(\phi;X)$ under GRH.

\begin{thm}\label{thm-main}
Assume GRH. \footnote{We need GRH for the following families of $L$-functions: $\zeta(s)$, real quadratic Dirichlet $L$-functions, Hecke $L$-functions over $K = \mathbb{Q}(\sqrt{-3})$ with cubic characters, and $\zeta_K(s)$.} For $\sigma <1$, we have
\begin{align*}
   \mathcal{D}^{\ast}(\phi;X) 
        &= \frac{\phih(0)}{L W^{\ast}(X)} \sum_{\chi}^{\ast} w\left(\frac{q}{X}\right) \log q -
        \frac{\phih(0)\log \pi}{L} \\
        & \qquad - \frac{2}{L} \infint \phi(\tau) \sum_{\ell \geq 1} \sum_p \frac{a(p)\log p}{p^{3\ell/2 + 6\pi i \tau \ell/L}} d\tau\\
        &\qquad + \frac{1}{2L}\infint \phi(\tau) \left(\Psi\left(\frac{1}{4}-\frac{\pi i \tau}{L}\right) + \Psi\left(\frac{1}{4}+\frac{\pi i \tau}{L}\right)\right) d\tau\\
        &\qquad + O\left(X^{-1/2 + \sigma/2 + \epsilon}\right),
\end{align*}
where $\Psi(z)=\frac{\Gamma'(z)}{\Gamma(z)}$ is the digamma function.
\end{thm}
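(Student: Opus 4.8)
The plan is to obtain the formula from the explicit formula applied to each $L(\chi,s)$ and then average over the family. Every primitive cubic character is even: $\chi(-1)$ is a cube root of unity equal to $\pm1$, hence $\chi(-1)=1$, so the completed $L$-function carries the Gamma factor $\pi^{-s/2}\Gamma(s/2)$. Applying the Riemann--von Mangoldt explicit formula to $D_X(\chi;\phi)$ (under GRH the zeros lie on the critical line, so the $\gamma$ are real) and using the functional equation relating $L(\chi,s)$ to $L(\bar\chi,1-s)$, I would write
\begin{align*}
D_X(\chi;\phi) &= \frac{\phih(0)}{L}\log\frac{q}{\pi} + \frac{1}{2L}\infint\phi(\tau)\left(\Psi\!\left(\tfrac14-\tfrac{\pi i\tau}{L}\right)+\Psi\!\left(\tfrac14+\tfrac{\pi i\tau}{L}\right)\right)d\tau \\
&\quad - \frac{2}{L}\sum_n \frac{\Lambda(n)}{\sqrt n}\,\mathrm{Re}\,\chi(n)\,\phih\!\left(\frac{\log n}{L}\right),
\end{align*}
isolating a conductor term, an archimedean digamma term, and a prime sum.

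The first two contributions are essentially independent of the fine structure of $\chi$. Averaging the conductor term gives $\frac{\phih(0)}{L W^{\ast}(X)}\sum_\chi^\ast w(q/X)\log q$, and writing $\log(q/\pi)=\log q-\log\pi$ together with $\int\phi=\phih(0)$ produces the $-\phih(0)\log\pi/L$ term; the digamma term is the same for every even $\chi$ and survives the average unchanged. The whole problem therefore reduces to evaluating the averaged prime sum
\[
-\frac{2}{L W^{\ast}(X)}\sum_n \frac{\Lambda(n)}{\sqrt n}\,\phih\!\left(\frac{\log n}{L}\right)\sum_\chi^\ast w(q/X)\,\mathrm{Re}\,\chi(n).
\]

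Next I would extract the main term, which arises because each $\chi$ has order $3$. For a prime power $n=p^k$ one has $\chi(p^k)=1$ precisely when $3\mid k$ and $p\nmid q$, so only cubes $n=p^{3\ell}$ contribute a non-oscillating average. For these, $\mathrm{Re}\,\chi(p^{3\ell})=\mathbf{1}[p\nmid q]$, and a local density computation over the parameterizing $\alpha\in\mathbb{Z}[\omega]$ shows $\frac{1}{W^{\ast}(X)}\sum_\chi^\ast w(q/X)\,\mathbf{1}[p\nmid q]\to a(p)$, where $a(p)=p/(p+2)$ for split primes $p\equiv1\bmod 3$ (there $p\mid q$ iff exactly one of the two primes above $p$ divides $\alpha$) and $a(p)=1$ for inert and ramified primes, which never divide $q$ in this family. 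Expanding $\phih(3\ell\log p/L)=\int\phi(\tau)p^{-6\pi i\ell\tau/L}\,d\tau$ then turns this cube contribution into exactly the stated double prime integral.

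The main obstacle is the remaining off-diagonal part, where $n$ is not a cube and $\chi(n)$ is a genuine nontrivial cubic value with no obvious main term. Here I would use the parameterization $\chi_\alpha(n)=\bigl(\tfrac{n}{\alpha}\bigr)_3$, apply cubic reciprocity to transfer the symbol to $\bigl(\tfrac{\alpha}{n}\bigr)_3$, and evaluate $\sum_\alpha w(N(\alpha)/X)\bigl(\tfrac{\alpha}{n}\bigr)_3$ by Poisson summation in $\mathbb{Z}[\omega]$, bounding the dual sum via estimates for cubic Gauss sums and the Hecke $L$-functions over $K=\mathbb{Q}(\sqrt{-3})$ attached to cubic characters; this is where the assumed GRH for $\zeta$, for real quadratic Dirichlet $L$-functions, for these cubic Hecke $L$-functions and for $\zeta_K$ is used, with the cubic family estimates of Baier--Young as the key input. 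Because $\phih$ is supported in $(-\sigma,\sigma)$ with $\sigma<1$, the prime sum is confined to $n\ll X^{\sigma}$, and the non-cube contribution is then $O(X^{-1/2+\sigma/2+\epsilon})$; this both forces the hypothesis $\sigma<1$ and yields the error term. Collecting the conductor, archimedean, diagonal-prime, and error pieces gives the claimed identity.
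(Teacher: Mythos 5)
Your skeleton --- the explicit formula for even characters, the split into conductor, archimedean and prime-sum pieces, and the extraction of the main term from the cube powers $p^{3\ell}$ with the density $a(p)$ --- is the same as the paper's (Lemmas \ref{lem explicit formula}, \ref{lem-first term of explicit formula}, \ref{lem Tag-5}, \ref{lem fourth term}), and your local-density reasoning for $a(p)$ is correct. The gap is in the only hard step, the non-cube character sums, which is precisely where GRH must do its work. What you need there is the bound
\begin{equation*}
\sum_{\alpha}''' w\left(\frac{N(\alpha)}{X}\right)\psi_n(\alpha)\ \ll\ n^{\epsilon}X^{1/2+\epsilon}
\qquad\text{uniformly for every non-cube } n\le X^{\sigma},
\end{equation*}
since only then does $S_1(X)\ll\sum_{p\le X^{\sigma}}\frac{\log p}{\sqrt p}\,p^{\epsilon}X^{1/2+\epsilon}\ll X^{1/2+\sigma/2+\epsilon}$ follow. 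The route you propose --- cubic reciprocity, then Poisson summation in $\mathbb{Z}[\omega]$ with cubic Gauss sum estimates --- cannot deliver this uniformity. For $n=p$ the modulus is $\asymp 3p$, of norm $\asymp p^{2}$: the dual sum is negligible when $p\le X^{1/2-\epsilon}$, but once $p>X^{1/2}$ it contains $\asymp p^{2}/X$ relevant frequencies, each carrying a Gauss sum of size $\asymp p$, so Poisson gives only $\frac{X}{p^{2}}\cdot\frac{p^{2}}{X}\cdot p^{1+\epsilon}=p^{1+\epsilon}$. Summing, $\sum_{X^{1/2}<p\le X^{\sigma}}\frac{\log p}{\sqrt p}\,p^{1+\epsilon}\asymp X^{3\sigma/2+\epsilon}$, which exceeds the claimed error term as soon as $\sigma>1/2$; refinements via the cubic large sieve (Heath--Brown, Baier--Young) are unconditional and still fall short of $\sigma<1$, which is exactly the obstruction described in the introduction. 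Moreover, GRH has no natural point of entry into a post-Poisson sum of Gauss sums, so invoking it there does not repair the estimate; and Poisson summation does not even apply directly, since $\alpha$ runs over squarefree primary elements with no rational prime divisor, and these sieve conditions must first be unfolded.

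The paper's actual mechanism (Lemma \ref{lem Tag-6}) is different: the generating Dirichlet series $\sum_{\alpha}\nu(\alpha)\psi_n(\alpha)N(\alpha)^{-s}$ is factored, using multiplicativity over split primes, as $L_K(\psi_n,s)^{1/2}L_K(\overline{\psi}_n,s)^{1/2}L_K(\psi_n,2s)^{-1/2}\cdots E(s)$ with $E(s)$ absolutely convergent for $Re(s)>1/2$ --- a step that simultaneously absorbs the squarefree and no-rational-prime-divisor conditions --- and then the Mellin contour in \eqref{eqn Tag-5} is shifted to $Re(s)=1/2+\epsilon$, where the GRH (Lindel\"of-type) bound of Lemma \ref{bound lemma2} gives growth $\ll q^{\epsilon}$ in the conductor of the Hecke $L$-functions and hence the uniform bound $\ll\kappa(n)^{\epsilon}X^{1/2+\epsilon}$, valid for all $n$ regardless of size. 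Replacing your Poisson step by this Mellin-transform/contour-shift argument (or by any argument yielding the uniform $X^{1/2+\epsilon}$ bound for all non-cube $n\le X^{\sigma}$) is what is needed to reach the full range $\sigma<1$; as written, your proof establishes the theorem only for $\sigma<1/2$ at best.
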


Now, the one level density can be approximated by a polynomial in $1/L$ as follows.
\begin{cor}\label{cor-main}
    Assume GRH. For $\sigma<1$ and any $M \geq 1$, we have
    \begin{align*}
    \mathcal{D}^{\ast}(\phi;X) 
    &= \phih(0) + \frac{\phih(0)}{L}\left(\frac{\mathfrak{w}'(1)}{\mathfrak{w}(1)} + 1 - \gamma - 2\log 2 - \frac{\pi}{2}  - \sum_{\ell\geq 1}\sum_p \frac{2a(p) \log p}{p^{3\ell/2}}\right)\\
    & + \sum_{k=1}^{M-1} \frac{I_k \phih^{(k)}(0)}{k! L^{k+1}} + O_{M,\phi}\left(\frac{1}{L^{M+1}} + X^{-1/2+\sigma/2+\epsilon}\right),
    \end{align*}
    where $$I_k = - \sum_{\ell\geq 1}\sum_p \frac{2 a(p) (3\ell)^k (\log p)^{k+1}}{p^{3\ell/2}} - 2^{k+2}\pi^{k+1}\int_0^{\infty} \frac{x^k e^{-\pi x}}{1-e^{-4\pi x}} dx.$$
\end{cor}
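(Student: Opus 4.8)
The plan is to take the exact identity of Theorem~\ref{thm-main} and convert each of its four terms into a finite power series in $1/L$ plus a negligible remainder. The only analytic inputs needed are Taylor's theorem with Lagrange remainder for the smooth compactly supported function $\phih$, the integral representation of the digamma function, and an asymptotic for the total weight. For the weight, I write $\log q = \log X + \log(q/X)$, so that the first term of Theorem~\ref{thm-main} involves $\frac{1}{W^{\ast}(X)}\sum_{\chi}^{\ast} w(q/X)\log q = \log X + \frac{\sum_{\chi}^{\ast} w(q/X)\log(q/X)}{W^{\ast}(X)}$. Using the count of admissible $\alpha$ with a power-saving error (as in the Baier--Young setup) and partial summation, one gets $W^{\ast}(X) = cX\mathfrak{w}(1)+O(X^{1-\delta})$ and $\sum_{\chi}^{\ast} w(q/X)\log(q/X) = cX\mathfrak{w}'(1)+O(X^{1-\delta})$, since $\int_0^\infty w(u)\log u\,du = \mathfrak{w}'(1)$; hence the ratio is $\mathfrak{w}'(1)/\mathfrak{w}(1)+O(X^{-\delta})$. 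As $L=\log X-1-\log 2-\log\pi$, this first term becomes $\phih(0)+\frac{\phih(0)}{L}\left(1+\log 2+\log\pi+\mathfrak{w}'(1)/\mathfrak{w}(1)\right)+O(X^{-\delta}/L)$, and the $\log\pi$ here cancels the second term $-\phih(0)\log\pi/L$ of Theorem~\ref{thm-main}.

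For the prime sum, I use the convention $\phih(\xi)=\int\phi(x)e^{-2\pi i x\xi}dx$, under which $\int\phi(\tau)p^{-6\pi i\ell\tau/L}d\tau=\phih(3\ell\log p/L)$, so the third term equals $-\tfrac{2}{L}\sum_{\ell\ge1}\sum_p \tfrac{a(p)\log p}{p^{3\ell/2}}\phih(3\ell\log p/L)$. Taylor-expanding $\phih$ to order $M-1$ about $0$, the $k$-th coefficient produces the convergent sum $\sum_{\ell,p}a(p)(3\ell)^k(\log p)^{k+1}p^{-3\ell/2}$, which is the first half of $I_k$; the $k=0$ term contributes $-\tfrac{2\phih(0)}{L}\sum_{\ell,p}a(p)(\log p)p^{-3\ell/2}$ to the $\phih(0)/L$ coefficient.

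For the gamma-factor term, by substituting $\tau\mapsto-\tau$ and using that $\phi$ is even, the two digammas contribute equally, so the fourth term is $\tfrac{1}{L}\int\phi(\tau)\Psi(\tfrac14+\tfrac{\pi i\tau}{L})\,d\tau$. Inserting $\Psi(z)=-\gamma+\int_0^\infty\tfrac{e^{-t}-e^{-zt}}{1-e^{-t}}\,dt$ and using $\int\phi(\tau)e^{-\pi i\tau t/L}d\tau=\phih(t/(2L))$, the constant part gives $\tfrac{\phih(0)}{L}\Psi(\tfrac14)$, while the $k$-th Taylor coefficient of $\phih(t/(2L))$ produces $\int_0^\infty\tfrac{t^k e^{-t/4}}{1-e^{-t}}dt=(-1)^{k+1}\Psi^{(k)}(\tfrac14)$, which after $t=4\pi x$ becomes exactly $-2^{k+2}\pi^{k+1}\int_0^\infty\tfrac{x^k e^{-\pi x}}{1-e^{-4\pi x}}dx$, the second half of $I_k$. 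Using the Gauss value $\Psi(\tfrac14)=-\gamma-\tfrac{\pi}{2}-3\log 2$ and collecting all $\phih(0)/L$ contributions yields $\mathfrak{w}'(1)/\mathfrak{w}(1)+1-\gamma-2\log 2-\tfrac{\pi}{2}-\sum_{\ell,p}\tfrac{2a(p)\log p}{p^{3\ell/2}}$, while collecting the $\phih^{(k)}(0)/(k!L^{k+1})$ terms ($k\ge1$) assembles $I_k$; note that only even $k$ survive since $\phih$ is even, so the odd-$k$ value of $I_k$ is immaterial.

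The main obstacle is controlling the truncation error, because $\phih$ is compactly supported and its Taylor series does not converge to it, so I cannot blindly extend the coefficient sums and integrals to infinity. I handle this by splitting each sum/integral at the support threshold (namely $3\ell\log p\le\sigma L$, resp.\ $t\le 2\sigma L$). On the short range, the Lagrange bound $|\phih(u)-\sum_{k<M}\phih^{(k)}(0)u^k/k!|\ll |u|^M$ contributes $O(L^{-M})$, hence $O(L^{-M-1})$ after the outer factor $1/L$; on the long range $\phih$ vanishes, so the discarded Taylor polynomial is dominated by an exponentially small tail of size $\ll X^{-c(\sigma)}$, which is $\ll_M L^{-M-1}$ since $e^{-cL}$ beats every power of $1/L$. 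Finally, the error $X^{-1/2+\sigma/2+\epsilon}$ is carried over unchanged from Theorem~\ref{thm-main}, completing the expansion.
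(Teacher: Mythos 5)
Your proposal is correct --- all the constants check out (the cancellation of $\log\pi$, the Gauss value $\Psi(\tfrac14)=-\gamma-\tfrac{\pi}{2}-3\log 2$ producing the $-2\log 2$ after combining with the $+\log 2$ from $L=\log X-\log 2\pi e$, and $4^{k+1}/2^k=2^{k+2}$ giving the exact second half of $I_k$) --- but your route differs from the paper's in its packaging. The paper never passes through Theorem~\ref{thm-main}: it assembles the corollary directly from the explicit formula (Lemma~\ref{lem explicit formula}) together with Lemma~\ref{lem-first term of explicit formula} (Mellin transform and contour shift for the weighted conductor sum), Lemma~\ref{lem-third term of explicit formula} (whose second display is already the $1/L$-expansion of the prime sum), and Lemma~\ref{lem fourth term 2} (the $1/L$-expansion of the archimedean integral, taken \emph{before} it is rewritten in digamma form). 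You instead post-process Theorem~\ref{thm-main}: for the prime sum and the digamma term you effectively invert the rewritings used in the theorem (undoing the Fourier identity, and undoing Lemma~\ref{lem fourth term} via $\Psi(z)=-\gamma+\int_0^\infty\frac{e^{-t}-e^{-zt}}{1-e^{-t}}\,dt$) and then Taylor-expand, which re-proves the content of Lemma~\ref{lem-third term of explicit formula} and Lemma~\ref{lem fourth term 2}; and for the first term you replace the paper's Mellin/contour argument by a sharp count of the family plus partial summation. Two remarks on what each approach buys. First, your partial-summation step is not really a shortcut: the power-saving count $\#\{\alpha : \nu(\alpha)=1,\ N(\alpha)\le u\}=cu+O(u^{1-\delta})$ is, under GRH, proved by exactly the Perron/contour analysis of $I(s)$ that underlies Lemma~\ref{lem-first term of explicit formula}, and the smoothed version the paper proves avoids the convergence issues of sharp-cutoff Perron; so this is equivalent analytic input, cited rather than simplified. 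Second, your worry about truncation is unnecessary (for both you and the paper): since $\phi$ is Schwartz, $\phih$ is Schwartz, so $\sup_{\mathbb{R}}|\phih^{(M)}|<\infty$ and the Lagrange bound $\bigl|\phih(u)-\sum_{k<M}\phih^{(k)}(0)u^k/k!\bigr|\ll_{M,\phi}|u|^M$ holds \emph{globally}; summing it against the absolutely convergent weights $a(p)(\log p)p^{-3\ell/2}$ and integrating it against $x^Me^{-\pi x}(1-e^{-4\pi x})^{-1}$ already gives the $O_{M,\phi}(L^{-M-1})$ error with no splitting at the support edge (the paper's truncations at $X^\eta$ and at $\eta=\frac{M}{\pi}\log\log X$ are likewise more caution than is strictly needed). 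With these two points understood, your argument is complete and yields the stated $\phih(0)/L$-coefficient and $I_k$ exactly.
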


Now we can see that the possible symmetry type for the family of cubic Dirichlet $L$-functions is $U$.
\begin{cor}
Assume GRH. For $\sigma  < 1$, we have
\begin{eqnarray*}
\lim_{X \rightarrow \infty} \mathcal{D}^{\ast}(\phi;X)= \hat{\phi}(0).
\end{eqnarray*}
\end{cor}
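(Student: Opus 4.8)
The plan is to read this off directly from Corollary \ref{cor-main}, which already expands $\mathcal{D}^{\ast}(\phi;X)$ as a polynomial in $1/L$ with explicit, $X$-independent coefficients. Since $L = \log(X/(2\pi e)) \to \infty$ as $X \to \infty$, every contribution beyond the leading term $\phih(0)$ carries a strictly negative power of $L$ (or a genuinely small power of $X$) and hence vanishes in the limit, leaving exactly $\phih(0)$. This is the hallmark of unitary symmetry.

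Concretely, I would invoke Corollary \ref{cor-main} with the choice $M = 1$, so that the intermediate sum $\sum_{k=1}^{M-1}$ is empty and
$$\mathcal{D}^{\ast}(\phi;X) = \phih(0) + \frac{\phih(0)}{L}\, C(w) + O_{\phi}\!\left(\frac{1}{L^2} + X^{-1/2+\sigma/2+\epsilon}\right),$$
where $C(w) = \frac{\mathfrak{w}'(1)}{\mathfrak{w}(1)} + 1 - \gamma - 2\log 2 - \frac{\pi}{2} - \sum_{\ell\geq 1}\sum_p \frac{2a(p)\log p}{p^{3\ell/2}}$. The first step is to confirm that $C(w)$ is a finite constant not depending on $X$: the Mellin factor $\mathfrak{w}'(1)/\mathfrak{w}(1)$ depends only on the fixed weight $w$, and the double sum converges absolutely because $a(p) \leq 1$, $\sum_p \frac{\log p}{p^{3/2}}$ converges, and for each $p$ the $\ell$-sum is a convergent geometric-type series $\sum_{\ell\geq 1} p^{-3\ell/2}$. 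Hence $\frac{\phih(0)}{L}C(w) = O_{w,\phi}(1/L)$.

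Next I would let $X \to \infty$. The hypothesis $\sigma < 1$ gives $-1/2 + \sigma/2 < 0$, so for sufficiently small $\epsilon$ the term $X^{-1/2+\sigma/2+\epsilon} \to 0$; and since $L \to \infty$, both $\frac{\phih(0)}{L}C(w)$ and $1/L^2$ tend to $0$ as well. Everything except the constant $\phih(0)$ therefore vanishes, which yields $\lim_{X\to\infty}\mathcal{D}^{\ast}(\phi;X) = \phih(0)$.

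I do not anticipate any real obstacle here, since the statement is an immediate consequence of the quantitative expansion already established; the only point that deserves care is the absolute convergence of $C(w)$, which is routine. As a sanity check one can argue directly from Theorem \ref{thm-main}: there the single term $\frac{\phih(0)}{L W^{\ast}(X)}\sum_{\chi}^{\ast} w(q/X)\log q$ is the source of the limit, since the family average of $\log q$ grows like $\log X$ and $\frac{\log X}{L} \to 1$, while the $\log\pi$ term, the prime sum over $a(p)\log p / p^{3\ell/2+\cdots}$, and the digamma integral (whose arguments tend to $1/4$) each carry a factor $1/L$ and vanish. This confirms that the surviving mass is precisely $\phih(0)$, consistent with symmetry type $U$.
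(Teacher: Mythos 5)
Your proof is correct and follows exactly the route the paper intends: the corollary is stated immediately after Corollary \ref{cor-main} as a direct consequence of it (the paper gives no separate argument), and your choice $M=1$ together with the observations that $C(w)$ is a finite $X$-independent constant and that $-1/2+\sigma/2<0$ for $\sigma<1$ is precisely what makes every non-leading term vanish as $X\to\infty$. Nothing further is needed.
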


Conrey, Farmer and Zimbauer \cite{CFZ} developed the Ratios conjecture,  which is a powerful recipe that predicts various statistics regarding $L$-functions. The Ratios conjecture is applied to compute one-level density for many different families of $L$-functions \cite{FPS2016, FM, HMM, Miller2}. The Ratios conjecture reveals lower order terms which the Katz-Sarnak conjecture is silent about. A typical recipe from this perspective leads us to the following prediction, which agrees with the one-level density in Theorem \ref{thm-main} up to $O(X^{-1/2+\sigma/2+\epsilon})$. 

\begin{thm}[One level density prediction via the Ratios conjecture]\label{thm-ratios conj prediction} Assume Conjecture \ref{conj Rw}. 
Let $\phi$ be an even Schwartz function whose Fourier transform is compactly supported. 

Then, the one-level density for the family of primitive cubic Dirichlet $L$-functions $L(\chi,s)$ is expressed as follows.
    \begin{align*}
        \mathcal{D}^{\ast}(\phi;X) 
        &= \frac{\phih(0)}{L W^{\ast}(X)} \sum_{\chi}^{\ast} w\left(\frac{q}{X}\right) \log q -
        \frac{\phih(0)\log \pi}{L} \\
        & \qquad - \frac{2}{L} \infint \phi(\tau) \sum_{\ell \geq 1} \sum_p \frac{a(p)\log p}{p^{3\ell/2 + 6\pi i \tau \ell/L}} d\tau\\
        &\qquad + \frac{1}{2L}\infint \phi(\tau) \left(\Psi\left(\frac{1}{4}-\frac{\pi i \tau}{L}\right) + \Psi\left(\frac{1}{4}+\frac{\pi i \tau}{L}\right)\right) d\tau\\
        &\qquad + O\left(X^{-1/2 + \epsilon}\right),
    \end{align*}
where $\Psi(z)=\frac{\Gamma'(z)}{\Gamma(z)}$ is the digamma function.
\end{thm}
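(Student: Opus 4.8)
The plan is to carry out the Conrey--Farmer--Zimbauer recipe: turn the counting of low-lying zeros into a contour integral of a logarithmic derivative, average that integrand over the family, and then replace the averaged ratio by the prediction supplied by Conjecture~\ref{conj Rw}.

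First I would apply the argument principle to each primitive cubic $L$-function. With $\Lambda(\chi,s)$ the completed $L$-function, and using that cubic characters are even (so the archimedean factor is $\pi^{-s/2}\Gamma(s/2)$ with conductor $q$), one writes
$$D_X(\chi;\phi) = \frac{1}{2\pi i}\left(\int_{(c)}-\int_{(1-c)}\right)\frac{\Lambda'}{\Lambda}(\chi,s)\,\phi\!\left(\frac{L}{2\pi i}\left(s-\tfrac12\right)\right)ds$$
for a fixed $c\in(\tfrac12,1)$. Splitting
$$\frac{\Lambda'}{\Lambda}(\chi,s) = \tfrac12\log\frac{q}{\pi} + \tfrac12\frac{\Gamma'}{\Gamma}\!\left(\frac{s}{2}\right) + \frac{L'}{L}(\chi,s)$$
and integrating term by term, the conductor piece yields, after dividing by $W^{\ast}(X)$ and summing over $\chi$, the term $\frac{\phih(0)}{LW^{\ast}(X)}\sum_{\chi}^{\ast}w(q/X)\log q$ together with $-\frac{\phih(0)\log\pi}{L}$, while the $\Gamma$-factor piece produces the digamma integral with argument $\tfrac14\pm\tfrac{\pi i\tau}{L}$ once the critical line is placed at $\tau$. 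The remaining, and essential, contribution is the averaged $\frac{L'}{L}$.

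Next I would interchange the average with the contour integral and bring in Conjecture~\ref{conj Rw}. Writing $R_w(\alpha,\gamma)$ for the weighted ratio average therein, the crucial input is the differentiation identity
$$\frac{1}{W^{\ast}(X)}\sum_{\chi}^{\ast}w\!\left(\frac{q}{X}\right)\frac{L'}{L}\!\left(\chi,\tfrac12+r\right) = \left.\frac{\partial}{\partial\alpha}R_w(\alpha,\gamma)\right|_{\alpha=\gamma=r},$$
so that differentiating the conjectured closed form in $\alpha$ and setting $\alpha=\gamma$ gives an explicit expression for the averaged logarithmic derivative on the critical line. Inserting this into the contour integral, the pole of the ratios prediction (coming from the $\zeta$ and $\zeta_K$ factors appearing in the recipe over $\mathbb{Z}[\omega]$) and the accompanying Euler product generate the prime sum $\sum_{\ell\geq1}\sum_p\frac{a(p)\log p}{p^{3\ell/2+6\pi i\tau\ell/L}}$; here the exponent $3\ell$ reflects that the family average of $\chi(n)$ is essentially supported on perfect cubes $n=p^{3\ell}$, and the local factor $a(p)$ is exactly the density obtained from averaging over cubic characters, distinguishing the split primes $p\equiv1\pmod 3$ from the inert and ramified ones.

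Finally I would shift the contour back to $\Re(s)=\tfrac12$, perform the change of variables $s=\tfrac12+\tfrac{2\pi i\tau}{L}$ so that $\phi$ is evaluated at $\tau$, and assemble the four displayed terms, with the dual integral on the line $\Re(s)=1-c$ handled by the functional equation $L(\chi,s)\leftrightarrow L(\bar\chi,1-s)$ and absorbed, together with the error in Conjecture~\ref{conj Rw}, into the $O(X^{-1/2+\epsilon})$ remainder. The \emph{main obstacle} I anticipate is the contour manipulation of the averaged $\frac{L'}{L}$: one must move the line of integration across the region where the prediction is valid, correctly identify the residue that produces the $p^{3\ell/2}$ denominators rather than $p^{\ell}$, and confirm that the arithmetic factor $a(p)$ emerges with the precise shape $p/(p+2)$ at split primes. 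Matching this term exactly against the recipe, and verifying that all tail and truncation contributions are genuinely $O(X^{-1/2+\epsilon})$, is the delicate part; the resulting expression then agrees termwise with Theorem~\ref{thm-main} up to $O(X^{-1/2+\sigma/2+\epsilon})$.
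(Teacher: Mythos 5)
Your high-level skeleton---writing the zero count as $\frac{1}{2\pi i}\left(\int_{(c)}-\int_{(1-c)}\right)$ of a logarithmic derivative, invoking the differentiation identity \eqref{eqn Tag-11} together with Conjecture~\ref{conj Rw}, and differentiating the conjectured closed form to extract the prime sum---is the same as the paper's. But two of your structural claims are false, and they do not compensate each other. First, splitting $\frac{\Lambda'}{\Lambda}(\chi,s)=\frac{1}{2}\log\frac{q}{\pi}+\frac{1}{2}\frac{\Gamma'}{\Gamma}\left(\frac{s}{2}\right)+\frac{L'}{L}(\chi,s)$ and ``integrating term by term'' cannot produce the $\log q$ and digamma terms: both $\log\frac{q}{\pi}$ and $\frac{\Gamma'}{\Gamma}\left(\frac{s}{2}\right)$ are holomorphic in the strip $1-c\leq \mathrm{Re}(s)\leq c$ (the poles of $\Gamma'/\Gamma(s/2)$ are at $s=0,-2,\dots$), and since $\phi$ is entire by Paley--Wiener and decays rapidly in horizontal strips, Cauchy's theorem gives
\begin{equation*}
\frac{1}{2\pi i}\left(\int_{(c)}-\int_{(1-c)}\right)\left(\frac{1}{2}\log\frac{q}{\pi}+\frac{1}{2}\frac{\Gamma'}{\Gamma}\left(\frac{s}{2}\right)\right)\phi\left(\frac{L}{2\pi i}\left(s-\frac{1}{2}\right)\right)ds=0 .
\end{equation*}
So these pieces contribute exactly nothing to the difference of the two vertical lines, and attributing the first two main terms of the theorem to them is not a derivation.

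Second, and this is the fatal step, the dual integral over $(1-c)$ is \emph{not} absorbed into $O(X^{-1/2+\epsilon})$. As in the proof of Proposition~\ref{conj-Sf} (following Conrey--Snaith), the functional equation turns it into
\begin{equation*}
\int_{(1-c)}\frac{L'(\chi,s)}{L(\chi,s)}\,f\left(-i\left(s-\frac{1}{2}\right)\right)ds=\int_{(c)}f\left(i\left(s-\frac{1}{2}\right)\right)\left(-\log\frac{q}{\pi}-\frac{1}{2}\frac{\Gamma'}{\Gamma}\left(\frac{1-s}{2}\right)-\frac{1}{2}\frac{\Gamma'}{\Gamma}\left(\frac{s}{2}\right)-\frac{L'(\chi,s)}{L(\chi,s)}\right)ds,
\end{equation*}
and it is precisely this reflected line that yields $\frac{\phih(0)}{LW^{\ast}(X)}\sum_{\chi}^{\ast}w\left(\frac{q}{X}\right)\log q-\frac{\phih(0)\log\pi}{L}$, the digamma integral, \emph{and} a second copy of the averaged $L'/L$. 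That second copy is what doubles the prime sum: it is why Proposition~\ref{conj-Sf} carries the coefficient $\frac{1}{\pi}$ (not $\frac{1}{2\pi}$) in front of $\int f(t)\,C_1(it)\,dt$, and why the theorem has $-\frac{2}{L}$ in front of the prime-sum term; with your bookkeeping that term would come out as $-\frac{1}{L}$. You appear to be conflating two different ``dual'' objects: the dual sum weighted by root numbers $\epsilon(\chi)$, which is indeed discarded \emph{inside the derivation of Conjecture~\ref{conj Rw}} using equidistribution of root numbers, versus the dual line of the contour integral, which must be kept and reflected. (Your proposal becomes correct if you reflect first, i.e.\ use $\frac{\Lambda'}{\Lambda}(\chi,s)=-\frac{\Lambda'}{\Lambda}(\overline{\chi},1-s)$ and evenness of $\phi$ to fold $(1-c)$ onto $(c)$, and only then split; that is essentially the paper's route.) A minor further point: the prime sum $C_1(z)=-\sum_p\sum_{\ell\geq 1}a(p)\log p\,p^{-3\ell/2-3\ell z}$ arises from differentiating the Euler product $\prod_p\left(1+a(p)\frac{1-p^{\nu'-\nu}}{p^{3/2+3\nu'}-1}\right)$ on the diagonal $\nu'=\nu$, not from any pole of the $\zeta$-factors; for this unitary family the ratio $\zeta\left(\frac{3}{2}+3\nu'\right)\zeta\left(\frac{3}{2}+2\nu'+\nu\right)^{-1}$ is regular and equals $1$ there.
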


Fiorilli and Miller \cite{FM} considered a family of all Dirichlet $L$-functions. They found a term which the Ratios conjecture failed to predict when $\sup(supp(\phih)) > 1$. It would be interesting if we were able to expand the support and find such a term. 

This work is inspired by a recent result of Fiorilli, Parks and Sodergren \cite{FPS2017}, which studied one-level density for quadratic Dirichlet $L$-functions for  $supp( \phih) \subset (-2,2)$. We tried to obtain one-level density for $\sup(supp(\phih)) > 1$ but were not able to do. The family of cubic Dirichlet $L$-functions differs from that of quadratic Dirichlet $L$-functions in the following sense:
\begin{enumerate}
    \item The characters are parameterized by the elements of $\mathbb{Z}[\omega]$ which, as a lattice, is of rank 2.
    \item The characters are not self dual.
    \item The phase distribution of the cubic Gauss sums (or the Kummer sums) cannot be given in a simple congruence condition \cite{Heath-Brown1979}.
\end{enumerate}
Whereas (2), (3) do not affect our argument much, (1) is the major obstacle in expanding the support of $\phih(x)$. Cubic Dirichlet $L$-functions of conductor $ < X$, like quadratic ones, form a relatively thin subset of cardinality $O(X)$ among Dirichlet $L$-functions of conductor $< X$ (whose cardinality is $\gg X^2$). From the analytic point of view, this sparsity is an obstacle in expanding the support of the Fourier transform of the test function. The family of quadratic $L$-functions is fortunately parameterized by $\mathbb{Z}$ which is of rank $1$, and this facilitates additional savings to consider wider supports. On the other hand, (1) implies that we do not have such an additional saving for cubic Dirichlet characters. It forces the estimation of $S_1(X) =\sum_p \frac{\log p}{\sqrt{p}} \phih \left( \frac{\log p}{L}\right) \sum_{\chi}^*\omega\left( \frac{q}{X} \right)\chi(p)$ which is the source of the error term $O(X^{-1/2+\sigma/2+\epsilon})$ in Theorem~\ref{thm-main}, to be inferior to that for quadratic $L$-functions when the support of $\phih$ is expanded. 

It also worths mentioning that the cubic Dirichlet characters are given as a restriction of cubic Hecke characters. This affects the strength of the large sieve estimates by Heath-Brown \cite{Heath-Brown2000}, since the summatory variables we take in the large sieve inequality become even thinner. In many cases, this is another difficulty in making estimates involving cubic Dirichlet characters as sharp as the ones for quadratic characters.

In Appendix, we collect some prerequisites which were useful for this work.

\bigskip
\section{One-level Density}

First, we introduce Weil's Explicit Formula which is one of the main tools for one-level density problems. 
\begin{lem}[Weil's Explicit Formula]\label{lem explicit formula}
    \begin{align*}
    \mathcal{D}^{\ast}(\phi;X) &= \frac{\phih(0)}{L W^{\ast}(X)} \sum_{\chi}^{\ast} w\left(\frac{q}{X}\right) \log q - \frac{\phih(0)}{L} \left( \gamma + 3\log 2 + \frac{\pi}{2} + \log \pi\right) \\
    &\qquad - \frac{1}{L W^{\ast}(X)} \sum_{p,m>0} \frac{\log p}{p^{m/2}} \phih\left(\frac{m\log p}{L}\right) \sum_{\chi}^{\ast} w\left(\frac{q}{X}\right) \left( \chi(p^m) + \overline{\chi}(p^m)\right)\\
    &\qquad + \frac{4\pi}{L} \int_0^{\infty} \frac{e^{-\pi x}}{1 - e^{-4\pi x}} \left(\phih(0) - \phih\left(\frac{2\pi x}{L}\right) \right) dx,
    \end{align*}
    where $\gamma = 0.5772156649\cdots$ is the Euler-Mascheroni constant.
\end{lem}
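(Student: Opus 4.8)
The plan is to apply Weil's explicit formula to each primitive cubic $L$-function $L(\chi,s)$ and then average over the family with the weight $w(q/X)/W^{\ast}(X)$. The structural input I would record first is that every primitive cubic character is \emph{even}: $\chi(-1)$ is at once a square root and a cube root of unity, hence equals $1$. Consequently the completed $L$-function is $\Lambda(\chi,s)=(q/\pi)^{s/2}\Gamma(s/2)L(\chi,s)$, entire of order one, with functional equation $\Lambda(\chi,s)=\epsilon(\chi)\Lambda(\overline{\chi},1-s)$. I would then integrate $\frac{\Lambda'}{\Lambda}(\chi,s)F(s)$, where $F(s)=\phi\left(-i(s-\tfrac12)\tfrac{L}{2\pi}\right)$ is entire and rapidly decaying in vertical strips and satisfies $F(\rho)=\phi(\gamma L/2\pi)$ at each zero, around the boundary of the critical strip. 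Shifting the two vertical edges to $\mathrm{Re}(s)=c$ and $\mathrm{Re}(s)=1-c$ with $c>1$ and invoking the argument principle expresses $D_X(\chi;\phi)=\sum_{\gamma}\phi(\gamma L/2\pi)$ as a sum of contour integrals of $\frac{\Lambda'}{\Lambda}(\chi,s)F(s)$.

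Next I would split $\frac{\Lambda'}{\Lambda}(\chi,s)=\tfrac12\log\frac{q}{\pi}+\tfrac12\Psi(s/2)+\frac{L'}{L}(\chi,s)$ and treat the three pieces separately. The constant piece $\tfrac12\log\frac{q}{\pi}$, after the change of variables $\tau=\tfrac{L}{2\pi}\mathrm{Im}(s)$ and using $\int\phi=\phih(0)$, contributes $\frac{\phih(0)}{L}\log\frac{q}{\pi}$, which splits into the $\log q$ term and the $-\tfrac{\phih(0)}{L}\log\pi$ part of the constant. For the arithmetic piece I would expand $\frac{L'}{L}(\chi,s)=-\sum_{p,m}\log p\,\chi(p^m)p^{-ms}$ on $\mathrm{Re}(s)=c$; writing $\phi$ through its Fourier transform shows that each prime power $p^m$ contributes a factor $\tfrac1L\phih(m\log p/L)$, while the $\overline{\chi}(p^m)$ companion arises from the $\mathrm{Re}(s)=1-c$ edge via the functional equation. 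This yields exactly $-\tfrac{1}{L}\sum_{p,m>0}\tfrac{\log p}{p^{m/2}}\phih(m\log p/L)(\chi(p^m)+\overline{\chi}(p^m))$. The archimedean piece $\tfrac12\Psi(s/2)$, evaluated on the critical line after combining the two edges, gives the digamma integral $\tfrac{1}{2L}\infint\phi(\tau)\bigl(\Psi(\tfrac14+\tfrac{\pi i\tau}{L})+\Psi(\tfrac14-\tfrac{\pi i\tau}{L})\bigr)d\tau$, which is the form appearing in Theorem~\ref{thm-main}.

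The remaining work, and the most delicate bookkeeping, is to rewrite this digamma integral in the kernel form stated in the lemma. I would insert Gauss's representation $\Psi(z)=-\gamma+\int_0^\infty\frac{e^{-t}-e^{-zt}}{1-e^{-t}}\,dt$, producing the term $-\tfrac{\gamma\phih(0)}{L}$ together with a double integral. After justifying Fubini by the decay of $\phih$ and the integrability of the kernel near $t=0$, I would use $\infint\phi(\tau)\cos(\pi\tau t/L)\,d\tau=\phih(t/2L)$ and substitute $t=4\pi x$ to reach $\tfrac{4\pi}{L}\int_0^\infty\frac{e^{-4\pi x}\phih(0)-e^{-\pi x}\phih(2\pi x/L)}{1-e^{-4\pi x}}\,dx$. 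Replacing the leading factor $e^{-4\pi x}\phih(0)$ by $e^{-\pi x}\phih(0)$ to match the stated kernel costs exactly $\tfrac{4\pi\phih(0)}{L}\int_0^\infty\frac{e^{-4\pi x}-e^{-\pi x}}{1-e^{-4\pi x}}\,dx$; the substitution $u=e^{-\pi x}$ turns this into a rational-function integral, and $\int_0^1\frac{1+u+u^2}{(1+u)(1+u^2)}\,du=\tfrac34\log2+\tfrac{\pi}{8}$ shows it equals $-(3\log2+\tfrac{\pi}{2})$, supplying the remaining constants $3\log2+\tfrac{\pi}{2}$.

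Finally I would apply $\tfrac{1}{W^{\ast}(X)}\sum_\chi^{\ast}w(q/X)$ to the per-character identity. Because the archimedean and constant contributions are independent of $\chi$ (all cubic characters being even), they pass through the averaging unchanged, while the conductor term and the prime sum acquire the weighted average, giving the stated formula. I expect the main obstacles to be technical: justifying the contour shifts and the absolute convergence needed for the explicit formula, and controlling the interchange of summation and integration both in the arithmetic piece and in the Gauss-representation step. The one genuinely computational point is the evaluation of the rational integral that pins down the constant $3\log2+\tfrac{\pi}{2}$.
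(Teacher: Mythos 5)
Your derivation is correct, and at its core it is the same argument the paper relies on: the paper's own proof is a two-line citation to the proof of \cite[Lemma 2.1]{FPS2017} (the standard contour-integration form of Weil's explicit formula, which is exactly what you write out), plus the remarks that cubic characters are automatically even and that the constant comes from a digamma special value. Where you genuinely diverge is in the final step, converting the archimedean (digamma) integral into the kernel form stated in the lemma. The paper does this conversion separately, in Lemma~\ref{lem fourth term}, by invoking \cite[Lemme I.2.1]{Mestre1986} with $\mathfrak{r}=1/4$, $a=\frac{1}{2L}$, together with the special value $\Psi\left(\frac14\right)=-\frac{\pi}{2}-3\log 2-\gamma$; you instead insert Gauss's representation of $\Psi$, which yields the $-\gamma\phih(0)/L$ term directly, and then pin down the remaining constant by evaluating $4\pi\int_0^{\infty}\frac{e^{-4\pi x}-e^{-\pi x}}{1-e^{-4\pi x}}\,dx=-3\log 2-\frac{\pi}{2}$ via $u=e^{-\pi x}$ (your value $\int_0^1\frac{1+u+u^2}{(1+u)(1+u^2)}\,du=\frac34\log 2+\frac{\pi}{8}$ checks out). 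The two conversions are equivalent---Mestre's lemma is essentially Gauss's representation integrated against $\phi$---but yours is self-contained and proves Lemma~\ref{lem fourth term} of the paper as a byproduct, whereas the paper's route isolates that computation as a quotable lemma reused in the proof of Theorem~\ref{thm-main}. One small point in your favor: your bookkeeping identifies the constant as $-\Psi\left(\frac14\right)=\gamma+3\log 2+\frac{\pi}{2}$, which is what the lemma and Lemma~\ref{lem fourth term} actually require; the paper's one-line proof cites $\frac{\Gamma'}{\Gamma}\left(\frac34\right)=-\gamma-\log 8+\frac{\pi}{2}$, which appears to be a slip, since the gamma factor of an even character is $\Gamma(s/2)$, so the relevant value on the critical line sits at $\frac14$, not $\frac34$ (and $\Psi\left(\frac34\right)$ would produce $\gamma+3\log 2-\frac{\pi}{2}$).
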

\begin{proof}
    The proof of \cite[Lemma 2.1]{FPS2017} works the same. The only difference is that $\chi$ is always even in our case, because $\chi(-1) = \chi((-1)^3) =1$. The Euler-Mascheroni constant appears from $\frac{\Gamma'}{\Gamma}\left(\frac{3}{4}\right) = -\gamma -\log 8 + \frac{\pi}{2}$. \cite[Theorem 12.13]{montgomery2007multiplicative} 
\end{proof}

To compute the one-level density for our family, we need to know the average of the first term and the third term of Lemma \ref{lem explicit formula}. These parts are carried out in Sec. \ref{sec-first term} to Sec. \ref{sec-third term}. In Sec. \ref{sec-fourth term}, we find a different expression for the fourth term and prove Theorem~\ref{thm-main} and Corollary~\ref{cor-main}.

\subsection{The first term in Lemma~\ref{lem explicit formula}: $\frac{\phih(0)}{L W^{\ast}(X)} \sum_{\chi}^{\ast} w\left(\frac{q}{X}\right) \log q$}\label{sec-first term}
To estimate the first term in Lemma~\ref{lem explicit formula} we compute the sum $\sum_{\alpha}''' w\left(\frac{N(\alpha)}{X}\right) \log N(\alpha)$, for which we follow the proof of \cite[Lemma 2.5 and 2.8]{FPS2016}. In this section we prove the following lemma.
\begin{lem}[The first term of the explicit formula]\label{lem-first term of explicit formula}
Under GRH,
\begin{equation*}
    \sum_{\chi}^{\ast} w\left(\frac{q}{X}\right) \log q 
    = L W^*(X) + \left(\frac{\mathfrak{w}'(1)}{\mathfrak{w}(1)} + \log 2\pi e\right) W^*(X) + O\left(X^{1/3}\log X\right).
\end{equation*}
\end{lem}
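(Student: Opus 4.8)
The plan is to reduce the weighted sum to the total weight $W^*(X)$ itself, so that the unknown density constant never has to be evaluated. Writing $\log N(\alpha)=\log X+\log\!\big(N(\alpha)/X\big)$ gives
\[
\sum_{\alpha}''' w\!\Big(\tfrac{N(\alpha)}{X}\Big)\log N(\alpha)=(\log X)\,W^*(X)+\widetilde W(X),\qquad \widetilde W(X):=\sum_{\alpha}''' (w\log)\!\Big(\tfrac{N(\alpha)}{X}\Big),
\]
where $(w\log)(t)=w(t)\log t$ has Mellin transform $\mathfrak{w}'(s)$. Since $\log X=L+\log 2\pi e$, the claimed identity is equivalent to $\widetilde W(X)=\frac{\mathfrak{w}'(1)}{\mathfrak{w}(1)}W^*(X)+O(X^{1/3}\log X)$, so it suffices to compare $\widetilde W(X)$ with $W^*(X)$.

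I would treat both sums by the same Mellin device built on the conductor Dirichlet series $D(s):=\sum_{\alpha}''' N(\alpha)^{-s}$, so that for $\mathrm{Re}(s)=c>1$
\[
W^*(X)=\frac{1}{2\pi i}\int_{(c)}\mathfrak{w}(s)\,D(s)\,X^s\,ds,\qquad \widetilde W(X)=\frac{1}{2\pi i}\int_{(c)}\mathfrak{w}'(s)\,D(s)\,X^s\,ds.
\]
The heart of the matter is the continuation of $D(s)$. The admissible $\alpha$ are the squarefree products of split primes with one prime chosen from each conjugate pair $\{\pi,\bar\pi\}$ above a rational prime $p\equiv 1\ (\mathrm{mod}\ 3)$, so the Euler factor is $1+2p^{-s}$ and $D(s)=\prod_{p\equiv 1\,(3)}(1+2p^{-s})$. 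Matching the $p^{-s}$, $p^{-2s}$ and $p^{-3s}$ coefficients of $\log D$ against $\log\zeta_K$ (with $K=\mathbb{Q}(\sqrt{-3})$ and $\zeta_K=\zeta\,L(\cdot,\chi_{-3})$) yields
\[
D(s)=\frac{\zeta_K(s)\,\zeta_K(3s)}{\zeta_K(2s)}\,G(s),
\]
where $G(s)$ is an Euler product converging absolutely for $\mathrm{Re}(s)>1/4$. Hence, for $\mathrm{Re}(s)>1/4$ and under GRH for $\zeta_K$, the only singularities of $D$ are a simple pole at $s=1$ (from $\zeta_K(s)$), a simple pole at $s=1/3$ (from $\zeta_K(3s)$, the arithmetic signature of the cubic structure), and a harmless simple zero at $s=1/2$ (from $1/\zeta_K(2s)$).

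I would then shift both contours to $\mathrm{Re}(s)=1/3+\epsilon$, crossing only the pole at $s=1$, whose residues are $R\,\mathfrak{w}(1)X$ and $R\,\mathfrak{w}'(1)X$ with $R=\mathrm{Res}_{s=1}D(s)$. The value of $R$ is never needed, since it cancels in the ratio $\mathfrak{w}'(1)/\mathfrak{w}(1)$; this is precisely why the final answer can be stated through $W^*(X)$. On the shifted line $\mathrm{Re}(2s)>2/3>\tfrac12$, so GRH for $\zeta_K$ keeps $1/\zeta_K(2s)$ holomorphic and polynomially bounded and controls the size of $\zeta_K(s)$ in the strip $1/3\le\mathrm{Re}(s)\le 1$, while $\zeta_K(3s)$ and $G(s)$ stay bounded; together with the rapid vertical decay of $\mathfrak{w}(s),\mathfrak{w}'(s)$ (from $w$ being Schwartz) the integrals converge. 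The secondary pole at $s=1/3$, sitting just left of the contour, contributes a term of size $X^{1/3}$ which, together with the tail, is absorbed into the error $O(X^{1/3}\log X)$. This gives $\widetilde W(X)=R\mathfrak{w}'(1)X+O(X^{1/3}\log X)=\frac{\mathfrak{w}'(1)}{\mathfrak{w}(1)}W^*(X)+O(X^{1/3}\log X)$, and substituting $\log X=L+\log 2\pi e$ proves the lemma.

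The main obstacle is this continuation-and-shift step. One must factor $D(s)$ so that the pole at $s=1$ is isolated through $\zeta_K(s)$, the $p^{-2s}$ fluctuations collapse into the single factor $1/\zeta_K(2s)$—turning what might look like an $X^{1/2}$ contribution into a harmless zero rather than a pole—and the genuine cubic pole at $s=1/3$ is exposed. GRH is indispensable here: without it, zeros of $\zeta_K(2s)$ in $1/4<\mathrm{Re}(s)<1/2$ would become poles of $D(s)$, producing secondary terms as large as $X^{1/2}$ and destroying the bound. Carrying the contour down to the cubic barrier at $\mathrm{Re}(s)=1/3$—flagged in the introduction as the obstruction caused by $\mathbb{Z}[\omega]$ being a rank-$2$ lattice—with only a logarithmic loss is exactly what pins the error term at $O(X^{1/3}\log X)$.
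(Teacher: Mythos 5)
Your overall strategy is the same as the paper's (Mellin inversion, factorization of the conductor Dirichlet series into $L$-functions, and a GRH-enabled contour shift), and your device of writing $\log N(\alpha)=\log X+\log\big(N(\alpha)/X\big)$ so that the second piece has Mellin transform $\mathfrak{w}'(s)$ is a genuinely nice variant: the paper instead differentiates the Dirichlet series, which creates \emph{double} poles at $s=1$ and $s=1/3$ and forces residue computations carrying $\log X$, whereas your poles stay simple. However, there is a concrete error at the step you yourself call the heart of the matter. The factorization $D(s)=\zeta_K(s)\zeta_K(3s)\zeta_K(2s)^{-1}G(s)$ with $G$ an Euler product converging absolutely for $\mathrm{Re}(s)>1/4$ is false. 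At a split prime $p\equiv 1\ (\mathrm{mod}\ 3)$ the factor of $D(s)\zeta_K(s)^{-1}$ is
\[
(1+2p^{-s})(1-p^{-s})^2 \;=\; 1-3p^{-2s}+2p^{-3s},
\]
and multiplying by the $\zeta_K(2s)$-factor $(1-p^{-2s})^{-2}$ and the $\zeta_K(3s)^{-1}$-factor $(1-p^{-3s})^{2}$ leaves $1-p^{-2s}+O(p^{-4s})$; at an inert prime $p\equiv 2\ (\mathrm{mod}\ 3)$ one gets $(1-p^{-2s})(1-p^{-4s})^{-1}(1-p^{-6s})=1-p^{-2s}+O(p^{-4s})$. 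So the $p^{-2s}$ fluctuations do \emph{not} collapse into $1/\zeta_K(2s)$ alone: what remains after removing $\zeta_K(s)\zeta_K(3s)/\zeta_K(2s)$ is $\zeta(2s)^{-1}$ times an absolutely convergent product. Hence your $G$ converges absolutely only for $\mathrm{Re}(s)>1/2$ and, unconditionally, need not even be holomorphic in $\mathrm{Re}(s)>1/4$. The correct factorization — the one the paper derives from its Proposition on products of $L$-functions — carries this extra factor:
\[
I(s)=\zeta(s)L(\xi,s)\,\zeta(2s)^{-2}L(\xi,2s)^{-1}\,\zeta(3s)L(\xi,3s)\,J(s)
=\frac{\zeta_K(s)\,\zeta_K(3s)}{\zeta_K(2s)\,\zeta(2s)}\,J(s),
\]
where $\xi$ is the quadratic character mod $3$ and $J(s)$ converges absolutely for $\mathrm{Re}(s)>1/4$.

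The gap is repairable without changing your architecture: RH for $\zeta$ is part of the ambient GRH hypothesis (and is in any case implied by RH for $\zeta_K$, since $\zeta_K=\zeta\cdot L(\xi,\cdot)$), under which the missing factor $\zeta(2s)^{-1}$ is holomorphic and polynomially bounded for $\mathrm{Re}(s)\geq 1/4+\delta$, so your contour shift and residue identification go through once the factor is inserted. Two smaller points. First, stopping the contour at $\mathrm{Re}(s)=1/3+\epsilon$ gives an error $O(X^{1/3+\epsilon})$, which is \emph{not} $O(X^{1/3}\log X)$; you should shift to $\mathrm{Re}(s)=1/4+\epsilon$ and pick up the pole at $s=1/3$ — which in your formulation is simple, so you in fact obtain the sharper error $O(X^{1/3})$, the paper's $\log X$ loss coming precisely from the double pole at $s=1/3$ that your decomposition avoids. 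Second, with the corrected factorization the zero of the integrand at $s=1/2$ has order two rather than one; this is harmless.
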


We state a proposition before we prove the above lemma.
\begin{prop}\label{prop-product of L functions}
    For Dirichlet characters $\chi_1,\cdots, \chi_r$, we have
    \begin{multline*}
    \prod_{p}\left(1+\sum_{i = 1}^r \frac{\chi_i(p)}{p^s}\right) =
    \prod_i L(\chi_i,s) \cdot \prod_i L(\chi_i^2,2s)^{-1} \cdot \prod_{i<j} L(\chi_i\chi_j,2s)^{-1}
    \cdot \prod_{i<j<k} L(\chi_i \chi_j \chi_k, 3s)^2 \\
    \cdot \prod_{i \neq j} L(\chi_i^2 \chi_j, 3s) \cdot \prod_i L(\chi_i^4,4s) \cdot \prod_{i<j} L(\chi_i^2\chi_j^2,4s)
    \cdot H(\chi_1,\cdots,\chi_r;s),
    \end{multline*}
    where $H(\chi_1,\cdots,\chi_r;s) = \prod_p \left(1 + O\left(\frac{1}{p^{4s}}\right) \right)$ converges absolutely for $Re(s) > 1/4$.
\end{prop}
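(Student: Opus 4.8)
The plan is to prove this as an identity of Euler products, matching local factors prime by prime. Every factor appearing on both sides is an Euler product, so it suffices to \emph{define} $H(\chi_1,\dots,\chi_r;s)$ to be the quotient of the local factor of the left-hand side by the product of the local factors of all the $L$-functions on the right, and then to show that each such local factor equals $1 + O(p^{-4s})$. Absolute convergence of $H$ for $Re(s) > 1/4$ is then immediate, since $\prod_p(1+O(p^{-4s}))$ converges absolutely exactly when $\sum_p p^{-4 Re(s)}$ does.

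To carry this out I would fix a prime $p$ and set $y = p^{-s}$ and $t_i = \chi_i(p)$, so that $|t_i| \leq 1$. The local factor of the left side is $1 + y\sum_i t_i$, and each $L(\psi,ks)$ contributes the local factor $(1-\psi(p)y^k)^{-1}$. Taking logarithms and expanding as power series in $y$ via $-\log(1-\psi(p)y^k) = \sum_{m\geq 1}\psi(p)^m y^{km}/m$ and $\log(1+y\sum_i t_i) = \sum_{m\geq 1}(-1)^{m-1}(y\sum_i t_i)^m/m$, the asserted identity (up to the factor $H$) is equivalent to matching the coefficients of $y^1$, $y^2$, and $y^3$ in the two logarithmic expansions; all contributions of order $y^4$ and higher are absorbed into $\log H_p$.

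The computation reduces to three coefficient checks. The $y^1$ coefficient is $\sum_i t_i$ on both sides. For $y^2$, the left side gives $-\tfrac12\sum_{i,j}t_it_j$, while the right side collects $\tfrac12\sum_i t_i^2$ from $\prod_i L(\chi_i,s)$, $-\sum_i t_i^2$ from $\prod_i L(\chi_i^2,2s)^{-1}$, and $-\sum_{i<j}t_it_j$ from $\prod_{i<j}L(\chi_i\chi_j,2s)^{-1}$; these sum to $-\tfrac12\sum_i t_i^2 - \sum_{i<j}t_it_j$, which equals the left side after writing $\sum_{i,j} = \sum_i + \sum_{i\neq j}$. The $y^3$ coefficient is the decisive one: expanding $\tfrac13(\sum_i t_i)^3$ according to the coincidence pattern of the ordered triple $(i,j,k)$ yields $\tfrac13\sum_i t_i^3 + \sum_{i\neq j}t_i^2 t_j + 2\sum_{i<j<k}t_it_jt_k$, and this matches term-by-term the contributions of $\prod_i L(\chi_i,s)$, $\prod_{i\neq j}L(\chi_i^2\chi_j,3s)$, and $\prod_{i<j<k}L(\chi_i\chi_j\chi_k,3s)^2$, provided one respects the ordered-pair sum $\sum_{i\neq j}$ and the exponent $2$ on the triple-product $L$-function.

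The main obstacle is this purely combinatorial bookkeeping at the cubic level: one must classify the ordered triples $(i,j,k)$ into all-equal, exactly-two-equal, and all-distinct, and track the multiplicities $1$, $3$, $6$ correctly — this is precisely what forces the exponent $2$ in $\prod_{i<j<k}L(\chi_i\chi_j\chi_k,3s)^2$ and the ordered index set in $\prod_{i\neq j}L(\chi_i^2\chi_j,3s)$. Once the three coefficients agree, every surviving term in $\log H_p$ is of order $y^4 = p^{-4s}$ or higher, with coefficients bounded uniformly in $p$ because $|t_i|\leq 1$; exponentiating gives $H_p = 1 + O(p^{-4s})$ and hence the stated absolute convergence for $Re(s) > 1/4$.
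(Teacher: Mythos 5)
Your proof is correct and takes essentially the same approach as the paper: both compare local Euler factors prime by prime, define $H$ as the resulting quotient, and verify cancellation of all terms of degree $\leq 3$ in $p^{-s}$, with everything of degree $\geq 4$ absorbed into $H_p = 1 + O(p^{-4s})$. The paper performs this check by directly multiplying the local polynomials in $x_i = \chi_i(p)/p^s$ and observing the product is $1+f$ with $f$ of degree at least $4$, while you organize the identical combinatorics (including the multiplicities $1$, $3$, $6$ that force the exponent $2$ and the ordered sum $\sum_{i\neq j}$) through logarithmic expansions --- a bookkeeping difference only.
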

\begin{proof}
    Writing $x_i = \frac{\chi_i(p)}{p^s}$, it suffices to compute
    \begin{multline*}
    \left(1 + \sum_{i=1}^r x_i \right) \prod_i (1-x_i)\cdot \prod_i \frac{1 - x_i^4}{1-x_i^2} \cdot \prod_{i<j} \frac{1 - x_i^2 x_j^2}{1-x_i x_j} \cdot \prod_{i < j < k}(1-x_i x_j x_k)^2 \cdot \prod_{j < k} (1-x_j^2 x_k)(1-x_j x_k^2) \\
    = 1 + f(x_1,\cdots, x_r),
    \end{multline*}
    where $f(x_1, \cdots, x_r)$ is a polynomial such that every term has degree at least 4.
\end{proof}

\emph{Proof of Lemma~\ref{lem-first term of explicit formula}.}
Let $\nu : \mathbb{Z}[\omega] \rightarrow \{0,1\}$ be defined by $\nu(\alpha) = 1$ if $\alpha \equiv 1$ mod $3$, $\alpha$ is square-free and has no rational prime divisor, and $\nu(\alpha) = 0$ otherwise. Then $$1+\sum_{\alpha}''' \frac{1}{N(\alpha)^s} = \sum_{\alpha} \frac{\nu(\alpha)}{N(\alpha)^s} = \prod_{p = \mathfrak{p}\mathfrak{p}':\text{ split}} \left( 1 + \frac{1}{N(\mathfrak{p})^s} + \frac{1}{N(\mathfrak{p}')^s} \right) = \prod_{p \equiv 1\mod 3} \left(1 + \frac{2}{p^s}\right).$$

Let $\xi$\index{$\xi$} be the real Dirichlet character modulo 3, given by $\xi(n) = \left(\frac{n}{3}\right)$. For $p \equiv 1$ mod 3, we observe that $1 + \frac{2}{p^s} = 1 + \frac{1}{p^s} + \frac{\xi(p)}{p^s}$, and for $p \equiv 2$ mod $3$, $1 = 1 + \frac{1}{p^s} + \frac{\xi(p)}{p^s}$. We thus write
\begin{equation}
	\prod_{p \equiv 1\mod 3} \left(1 + \frac{2}{p^s}\right) = \prod_{p \neq 3} \left(1 + \frac{1}{p^s} + \frac{\xi(p)}{p^s}\right).
\end{equation}
By Proposition~\ref{prop-product of L functions} we can write
\begin{equation}\label{eqn_I(s)}
	\prod_{p \equiv 1\mod 3} \left(1 + \frac{2}{p^s}\right) = \zeta(s) L(\xi,s) \zeta(2s)^{-2}L(\xi,2s)^{-1} \zeta(3s) L(\xi,3s) J(s) =: I(s),
\end{equation}
where $J(s)$ converges absolutely for $Re(s) > 1/4$. We thus consider $I(s)$\index{$I(s)$} as the analytic continuation of $1 + \sum_{\alpha}''' \frac{1}{N(\alpha)^s}$.

Using the Mellin transform identity(Mellin inversion) we write
\begin{equation}\label{eqn Tag-2}
\sum_{\alpha}''' w\left(\frac{N(\alpha)}{X}\right) \log N(\alpha) = \frac{1}{2\pi i} \int_{(2)} \sum_{\alpha} \frac{\nu(\alpha) \log N(\alpha)}{N(\alpha)^s} X^s \mathfrak{w}(s) ds
= -\frac{1}{2\pi i} \int_{(2)} \frac{\partial I(s)}{\partial s} X^s \mathfrak{w}(s) ds.
\end{equation}

We want to move the contour of integration in \eqref{eqn Tag-2} to the left. 
Under GRH, we see that the singularities of $I(s)$ in the region $Re(s) > 1/4$ are at $s = 1$ and $s = 1/3$, which are both simple poles. The derivative $\frac{\partial}{\partial s} I(s)$ then has singularities of order 2 at $s = 1$ and $s = 1/3$, and we can use Cauchy's differentiation formula for these points. The residue at $s=1$ is $$-\lim_{s\rightarrow 1} \left(I'(s) (s-1)^2 \mathfrak{w}(s)\right) X\log X - \lim_{s\rightarrow 1} \left( \frac{\partial}{\partial s}\left( I'(s) (s-1)^2 \mathfrak{w}(s)\right)\right) X.$$ 
It is easy to see that $$-\lim_{s\rightarrow 1} \left(I'(s) (s-1)^2 \mathfrak{w}(s)\right)  = \underset{s=1}{Res}\left( I(s)\mathfrak{w}(s)\right),$$ and $$-\lim_{s\rightarrow 1} \left( \frac{\partial}{\partial s}\left( I'(s) (s-1)^2 \mathfrak{w}(s)\right)\right) = \underset{s=1}{Res}\left( I(s) \mathfrak{w}'(s)\right) = \frac{\mathfrak{w}'(1)}{\mathfrak{w}(1)} \underset{s=1}{Res} \left( I(s)\mathfrak{w}(s)\right).$$

We move the contour of integration to $(1/4+\epsilon)$. By Lemma \ref{bound lemma1}, Lemma \ref{bound lemma2} and the fast decaying property of $\mathfrak{w}$ (\cite[Lemma 2.1]{FPS2016}), we have
\begin{equation*}
\sum_{\alpha}''' w\left(\frac{N(\alpha)}{X}\right) \log N(\alpha) = \underset{s=1}{Res}\left( I(s) \mathfrak{w}(s)\right) \left(X\log X + \frac{\mathfrak{w}'(1)}{\mathfrak{w}(1)} X\right) + O\left(X^{1/3} \log X\right). 
\end{equation*}
The same argument implies that $$W^*(X) = \sum_{\alpha}''' w\left(\frac{N(\alpha)}{X}\right) = \underset{s=1}{Res}\left(I(s)\mathfrak{w}(s)\right) X + O\left(X^{1/3}\right),$$ or $$\underset{s=1}{Res}\left(I(s)\mathfrak{w}(s)\right) = \frac{W^*(X)}{X} + O\left(X^{-2/3}\right).$$
Hence we have Lemma~\ref{lem-first term of explicit formula}.
\qed

\subsection{The third term of Lemma~\ref{lem explicit formula}: $\frac{1}{L W^{\ast}(X)} \sum_{p,m} \frac{\log p}{p^{m/2}} \phih\left(\frac{m\log p}{L}\right) \sum_{\chi}^{\ast} w\left(\frac{q}{X}\right) \left( \chi(p^m) + \overline{\chi}(p^m)\right)$}\label{sec-third term}

In this section we prove the following lemma.
\begin{lem}[The third term of the explicit formula]\label{lem-third term of explicit formula}
    Under GRH, for any fixed $M \geq 1$, 
\begin{align*}
     \sum_{p,m>0} \frac{\log p}{p^{m/2}} \phih\left(\frac{m\log p}{L}\right) &\sum_{\chi}^{\ast} w\left(\frac{q}{X}\right) \left( \chi(p^m) + \overline{\chi}(p^m)\right) \\
     &= W^{\ast}(X) \sum_{\ell \geq 1}\sum_p \frac{2 a(p)\log p}{p^{3\ell/2}}\phih\left(\frac{3\ell \log p}{L}\right) +  O\left(X^{1/2 + \sigma/2 + \epsilon}\right) \\
     &= W^{\ast}(X) \sum_{k=0}^{M-1} \left(\frac{\phih^{(k)}(0)}{k! L^k} \sum_{\ell \geq 1}\sum_p \frac{2a(p) (3\ell)^k (\log p)^{k+1}}{p^{3\ell/2}} \right) \\
     &\qquad \qquad + O_{M,\phi}\left( \frac{W^{\ast}(X)}{L^M }+ X^{1/2 + \sigma/2 + \epsilon}\right).
\end{align*}
\end{lem}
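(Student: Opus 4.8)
The plan is to split the sum over $m$ according to the residue of $m$ modulo $3$, isolating the diagonal contribution $m = 3\ell$ (which produces the stated main term) from the off-diagonal contribution $m \not\equiv 0 \pmod 3$ (which gets absorbed into the error). The starting observation is that since $\chi$ is cubic we have $\chi(p^{3\ell}) = \overline{\chi}(p^{3\ell}) = 1$ whenever $p \nmid q$, so $\chi(p^{3\ell}) + \overline{\chi}(p^{3\ell}) = 2\,[\,p\nmid q\,]$, whereas for $m \equiv 1$ or $2 \pmod 3$ the relation $\chi(p)^2 = \overline{\chi}(p)$ collapses both cases to $\chi(p^{m}) + \overline{\chi}(p^{m}) = \chi(p) + \overline{\chi}(p)$. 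Thus the third term breaks into a diagonal piece $\sum_{\ell \ge 1}\sum_p \frac{2\log p}{p^{3\ell/2}}\phih\!\left(\frac{3\ell\log p}{L}\right) W_p^{\ast}(X)$, where $W_p^{\ast}(X) := \sum_{\chi:\,p\nmid q}^{\ast} w(q/X)$, plus an off-diagonal piece of the shape $\left(\sum_{m\not\equiv 0(3)}\frac{\log p}{p^{m/2}}\phih(\tfrac{m\log p}{L})\right)\sum_{\chi}^{\ast} w(q/X)\big(\chi(p)+\overline{\chi}(p)\big)$.

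For the diagonal piece I would evaluate $W_p^{\ast}(X)$ by rerunning the Mellin/contour argument of Lemma~\ref{lem-first term of explicit formula}, but with the generating function modified to omit conductors divisible by $p$. Removing the local contribution of $p$ from $I(s)$ in \eqref{eqn_I(s)} replaces the Euler factor $1 + 2p^{-s}$ by $1$ for split $p \equiv 1 \pmod 3$ and leaves it unchanged otherwise, so the relevant Dirichlet series is $I_p(s) = I(s)/(1 + 2p^{-s})$ for $p \equiv 1 \pmod 3$ and $I_p(s) = I(s)$ for the other primes. Its leading pole is still the simple pole at $s=1$, and by the very definition of $a(p)$ one has $\operatorname{Res}_{s=1}(I_p(s)\mathfrak{w}(s)) = a(p)\operatorname{Res}_{s=1}(I(s)\mathfrak{w}(s))$. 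Shifting the contour to $\operatorname{Re}(s) = 1/4 + \epsilon$ under GRH then gives $W_p^{\ast}(X) = a(p)W^{\ast}(X) + O(X^{1/3}\log X)$ uniformly in $p$. Substituting this and using that $\sum_{\ell\ge1}\sum_p \frac{\log p}{p^{3\ell/2}}$ converges, the diagonal piece equals $W^{\ast}(X)\sum_{\ell\ge1}\sum_p \frac{2a(p)\log p}{p^{3\ell/2}}\phih(\tfrac{3\ell\log p}{L}) + O(X^{1/3+\epsilon})$, and $X^{1/3+\epsilon}$ is comfortably below the target error.

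The off-diagonal piece is where the real difficulty lies, and it is the source of the exponent $X^{1/2+\sigma/2+\epsilon}$. Because $\phih$ is supported in $(-\sigma,\sigma)$ the surviving primes satisfy $p^{m} \ll X^{\sigma}$, and the dominant contribution is $m=1$, namely $S_1(X) = \sum_p \frac{\log p}{\sqrt p}\phih(\tfrac{\log p}{L})\sum_{\chi}^{\ast} w(q/X)\chi(p)$ and its conjugate, the larger $m$ being smaller by the extra $p^{-m/2}$. To bound $S_1(X)$ I would open the character sum over the parameterization $\alpha \in \mathbb{Z}[\omega]$, use cubic reciprocity to rewrite $\chi_\alpha(p) = \left(\tfrac{p}{\alpha}\right)_3$ as a nontrivial cubic residue character of $\alpha$ of modulus dividing $p$, and then apply Poisson summation over the lattice $\mathbb{Z}[\omega]$ (equivalently invoke Heath-Brown's large sieve for cubic characters \cite{Heath-Brown2000}). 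Orthogonality annihilates the zero-frequency term, and the remaining frequencies are governed by cubic Gauss sums together with the rapid decay of the transform of $w$. The main obstacle is exactly that $\mathbb{Z}[\omega]$ is a rank-$2$ lattice: the conductors $N(\alpha) \asymp X$ range over a two-dimensional region, so the large-sieve saving is weaker than in the quadratic (rank-$1$) case and yields only $S_1(X) = O(X^{1/2+\sigma/2+\epsilon})$, which is what forces the restriction $\sigma < 1$.

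Finally, to deduce the second equality I would Taylor expand $\phih(\tfrac{3\ell\log p}{L}) = \sum_{k=0}^{M-1}\frac{\phih^{(k)}(0)}{k!}\big(\tfrac{3\ell\log p}{L}\big)^k + O\!\big(\|\phih^{(M)}\|_\infty (\tfrac{3\ell\log p}{L})^M\big)$ on the support $|\tfrac{3\ell\log p}{L}| \le \sigma$. Inserting this into the diagonal main term produces, for each $k$, the coefficient $\frac{\phih^{(k)}(0)}{k!L^k}\sum_{\ell\ge1}\sum_p \frac{2a(p)(3\ell)^k(\log p)^{k+1}}{p^{3\ell/2}}$ once the originally finite sums are completed to the full convergent series; the tail beyond the support is only a negative power of $X$, so after multiplying by $W^{\ast}(X) \asymp X$ it contributes $O(X^{1-c\sigma})$, which is absorbed into $O_{M,\phi}(W^{\ast}(X)/L^M)$ since $X^{c\sigma}$ beats any power of $\log X$. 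The degree-$M$ remainder sums to $O_{M,\phi}(W^{\ast}(X)/L^M)$ thanks to the convergence of $\sum_{\ell\ge1}\sum_p \frac{(3\ell)^M(\log p)^{M+1}}{p^{3\ell/2}}$, and combining this with the error from the first equality gives the claimed expansion with remainder $O_{M,\phi}(W^{\ast}(X)/L^M + X^{1/2+\sigma/2+\epsilon})$.
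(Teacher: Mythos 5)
Your overall skeleton agrees with the paper's: the split into $m\equiv 0\pmod 3$ versus $m\not\equiv 0\pmod 3$, the Mellin/contour evaluation of the diagonal (your $W_p^{\ast}(X)=a(p)W^{\ast}(X)+O(X^{1/3}\log X)$ is exactly Lemma~\ref{lem Tag-5}, since $\psi_{p^{3\ell}}=\psi_{p^3}$ is the indicator of $(p,N(\alpha))=1$), and the Taylor expansion giving the second displayed equality all match Lemmas~\ref{lem Tag-5}, \ref{lem-S2 S3m} and the proof of \eqref{eqn main of 3rd term}. The genuine gap is your off-diagonal step, i.e.\ the bound $S_1(X)\ll X^{1/2+\sigma/2+\epsilon}$ (and similarly $S_2$, $S_m$), which is the entire content of the restriction $\sigma<1$ and the source of the error term. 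You present this as the output of an \emph{unconditional} argument (cubic reciprocity, Poisson summation over $\mathbb{Z}[\omega]$, Heath-Brown's cubic large sieve); indeed GRH appears nowhere in your treatment of this step, even though the lemma is stated under GRH and this is precisely where the paper uses it. The paper's proof (Lemma~\ref{lem Tag-6}) is different in kind: for a non-cube $n$, the generating series $\sum_{\alpha}\nu(\alpha)\psi_n(\alpha)N(\alpha)^{-s}$ is factored into half-powers of cubic Hecke $L$-functions, $L_K(\psi_n,s)^{1/2}L_K(\overline{\psi}_n,s)^{1/2}\cdots E(s)$, which under GRH are holomorphic and $\ll (\kappa(n)\,X)^{\epsilon}$ on $\mathrm{Re}(s)=1/2+\epsilon$ (Lemma~\ref{bound lemma2}); shifting the contour in \eqref{eqn Tag-5} to that line gives the per-prime square-root bound $\ll p^{\epsilon}X^{1/2+\epsilon}$, and the exponent $\sigma/2$ then comes from the \emph{trivial} summation $\sum_{p\leq X^{\sigma+\epsilon}}\log p/\sqrt{p}\ll X^{\sigma/2+\epsilon}$. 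In other words, $1/2+\sigma/2$ encodes ``GRH cancellation per prime, summed trivially over primes''; it is not something the rank-$2$ large sieve yields, and attributing it to that weakened sieve reverses the logic of the paper's introduction, which cites the sieve's weakness as the obstacle to going \emph{beyond} this error term.

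Concretely, the tools you name cannot reach the stated bound. First, Poisson summation does not apply directly to $\sum_{\alpha}'''$ because of the squarefree and no-rational-prime-divisor conditions; opening them with M\"obius sums over $d$ with $N(d)\ll X^{1/2}$ leaves inner lattice sums of length $N(\gamma)\asymp X/N(d)^2$ against a character of modulus norm $\asymp p^2$, and in the range $N(d)\gg X^{1/2}/p$ Poisson saves nothing: one gets roughly $p X^{1/2+\epsilon}$ per prime, hence $\sum_{p\le X^{\sigma}}(\log p/\sqrt{p})\cdot pX^{1/2+\epsilon}\ll X^{1/2+3\sigma/2+\epsilon}$, nontrivial only for $\sigma<1/3$. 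Second, the large sieve of \cite{Heath-Brown2000} controls a mean square over the family, so to apply it to the single weighted sum $S_1(X)$ you must first use Cauchy--Schwarz over the $\asymp X$ characters; with $Q\asymp X$, $N\asymp X^{\sigma}$ the sieve bound is at least $Q\|a\|^2\gg X$, so the resulting estimate is $S_1(X)\ll X^{1/2}\cdot X^{1/2+\epsilon}\gg X$, no better than trivial for any $\sigma>0$. Thus your sentence claiming the rank-$2$ sieve ``yields only $S_1(X)=O(X^{1/2+\sigma/2+\epsilon})$'' is an assertion rather than a derivation, and an unconditional proof of that bound would in fact be a new result going beyond this paper. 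To repair the argument, replace your off-diagonal step by the paper's GRH contour shift: express the $\alpha$-sum for each non-cube $p^m$ via square roots of Hecke $L$-functions as in Lemma~\ref{lem Tag-6}, bound them on $\mathrm{Re}(s)=1/2+\epsilon$, and then sum trivially over $p$ in the support of $\phih$.
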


Let $$S_m(X) = \sum_{p} \frac{\log p}{p^{m/2}} \phih\left(\frac{m\log p}{L}\right) \sum_{\chi}^{\ast} w\left(\frac{q}{X}\right) \chi(p^m), \qquad \kappa(n) = \prod_{p|n}p,$$\index{$\kappa(n)$} and for $n>0$, $\psi_n(\alpha) = \left(\frac{n}{\alpha}\right)_3$\index{$\psi_n(\alpha) = \left(\frac{n}{\alpha}\right)_3$} ($n=3$ is possible).  We start with following estimations.
\begin{lem}\label{lem Tag-5} Under GRH,
   for $n=$a cube, $n>0$,
    \begin{eqnarray*}
    \sum_{\alpha}''' w\left(\frac{N(\alpha)}{X}\right) \psi_n(\alpha)
    &=& a(n) W^{\ast}(X) + O\left(X^{1/3}\right).
    \end{eqnarray*}
\end{lem}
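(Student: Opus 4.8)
## Proof Proposal for Lemma~\ref{lem Tag-5}

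The plan is to express the sum over primitive cubic characters (equivalently, over the parameters $\alpha$ with $\nu(\alpha)=1$) as a contour integral via Mellin inversion, exactly as in the proof of Lemma~\ref{lem-first term of explicit formula}, but now with the extra cubic-residue twist $\psi_n(\alpha) = \left(\frac{n}{\alpha}\right)_3$ inserted. Since $n$ is assumed to be a perfect cube and $n>0$, the key simplification is that $\psi_n$ is essentially trivial on the relevant elements: for a cube $n=m^3$, the cubic residue symbol $\left(\frac{m^3}{\alpha}\right)_3 = \left(\frac{m}{\alpha}\right)_3^3 = 1$ whenever $\gcd(m,\alpha)=1$, and the symbol vanishes when $\alpha$ shares a prime factor with $m$. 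So $\psi_n(\alpha)$ is the indicator function of $\gcd(n,\alpha)=1$ on the support of $\nu$. First I would therefore rewrite
\[
\sum_{\alpha}''' w\left(\frac{N(\alpha)}{X}\right) \psi_n(\alpha)
= \sum_{\substack{\alpha:\ \nu(\alpha)=1\\ \gcd(\alpha,n)=1}} w\left(\frac{N(\alpha)}{X}\right),
\]
turning the problem into counting the same square-free, no-rational-prime-divisor elements but with finitely many prime constraints removed.

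Next I would build the corresponding Dirichlet series. Following the calculation that produced $I(s)$ in \eqref{eqn_I(s)}, the Euler product for the restricted sum is obtained from $\prod_{p\equiv 1\,(3)}(1+2p^{-s})$ by deleting the finitely many Euler factors at primes $p$ dividing $n$ (equivalently $p \mid \kappa(n)$). Writing the full generating series as $I(s)$ and the restricted one as $I_n(s) = I(s)\cdot \prod_{p \mid \kappa(n),\ p\equiv 1(3)} (1+2p^{-s})^{-1}$, the local correction factor is holomorphic and nonvanishing for $\mathrm{Re}(s)>0$. Then by Mellin inversion,
\[
\sum_{\substack{\alpha:\ \nu(\alpha)=1\\ \gcd(\alpha,n)=1}} w\left(\frac{N(\alpha)}{X}\right)
= \frac{1}{2\pi i}\int_{(2)} I_n(s)\, X^s\, \mathfrak{w}(s)\, ds,
\]
and I would shift the contour to $\mathrm{Re}(s)=1/3+\epsilon$, or rather to $1/4+\epsilon$ as before. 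The residue at the simple pole $s=1$ gives the main term $\mathrm{Res}_{s=1}(I(s)\mathfrak{w}(s))\cdot X \cdot \prod_{p\mid\kappa(n),\,p\equiv 1(3)}(1+2/p)^{-1}$. The local factor at $s=1$ is precisely $\prod_{p\mid n,\,p\equiv 1(3)}\frac{p}{p+2} = a(n)$ by the definition of $a(p)$ (note $a(p)=1$ for $p\not\equiv 1(3)$, so primes dividing $n$ with $p\equiv 2(3)$ or $p=3$ contribute nothing). Comparing with $W^{\ast}(X) = \mathrm{Res}_{s=1}(I(s)\mathfrak{w}(s))\,X + O(X^{1/3})$ from the previous lemma, the main term is exactly $a(n)W^{\ast}(X)$.

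The remaining task is the error analysis, which is where the GRH hypothesis and the bulk of the work enter. After collecting the pole at $s=1$, I would pass the contour through the pole at $s=1/3$ (coming from the $\zeta(3s)L(\xi,3s)$ factor); this pole contributes a term of size $O(X^{1/3})$, which is absorbed into the stated error. The shifted integral on $\mathrm{Re}(s)=1/4+\epsilon$ is controlled, under GRH, by the subconvex/convexity bounds for $\zeta(s)$, $L(\xi,s)$ and their values on the vertical line (Lemma~\ref{bound lemma1}, Lemma~\ref{bound lemma2}), together with the rapid decay of $\mathfrak{w}(s)$; this yields a contribution smaller than $X^{1/3}$. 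I expect the main obstacle to be \emph{uniformity in $n$}: the residue and the error terms carry factors depending on the finitely many removed primes $p\mid n$, and one must verify that the implied constant in $O(X^{1/3})$ does not blow up as $n$ grows. Because the local factors $(1+2/p)^{\pm 1}$ are bounded between two absolute constants and the number of such primes is $O(\log n)$, this dependence is mild (at worst polylogarithmic in $n$), but making it genuinely uniform — so that the same $O(X^{1/3})$ holds across the range of $n$ needed in the application to $S_m(X)$ — is the delicate point that must be handled carefully rather than hidden in the constant.
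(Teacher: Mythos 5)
Your strategy coincides with the paper's: recognize that for $n$ a cube $\psi_n(\alpha)$ is the indicator of $(\alpha,n)=1$, apply Mellin inversion, identify the Dirichlet series as $I(s)\prod_{p\mid n,\,p\equiv 1\,(3)}(1+2p^{-s})^{-1}$, shift the contour to $\mathrm{Re}(s)=1/4+\epsilon$, and extract the main term $a(n)W^{\ast}(X)$ from the residue at $s=1$ and the $O(X^{1/3})$ from the pole at $s=1/3$. However, one step of your justification is false as written: the claim that the correction factor $\prod_{p\mid\kappa(n),\,p\equiv 1\,(3)}(1+2p^{-s})^{-1}$ is holomorphic for $\mathrm{Re}(s)>0$. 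Each factor $(1+2p^{-s})^{-1}$ has infinitely many poles, located at $s=\bigl(\log 2+(2k+1)\pi i\bigr)/\log p$, $k\in\mathbb{Z}$, i.e.\ on the vertical line $\mathrm{Re}(s)=\log 2/\log p$. For the admissible primes $p=7$ and $p=13$ (both $\equiv 1 \bmod 3$, and $n=7^{3\ell}$ genuinely occurs in the application to $S_{3\ell}(X)$) this line has real part $\approx 0.356$ and $\approx 0.270$ respectively, which lies strictly inside the region swept by your contour shift (indeed to the right even of $\mathrm{Re}(s)=1/3$). So, as justified in your write-up, the shift would cross infinitely many poles that your argument never accounts for.

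The gap is repairable, and the repair is what the paper does implicitly when it says one ``obtains an analogue of Lemma~\ref{lem-first term of explicit formula}'': instead of treating the series as the fixed function $I(s)$ times a correction, factor the restricted Euler product $\prod_{p\equiv 1\,(3),\,p\nmid n}(1+2p^{-s})$ directly via Proposition~\ref{prop-product of L functions}, with the Euler factors at $p\mid 3n$ removed. This exhibits the series as $\zeta(s) L(\xi,s)\zeta(2s)^{-2}L(\xi,2s)^{-1}\zeta(3s)L(\xi,3s)$ times finitely many elementary modified local factors and an absolutely convergent product, whence (under GRH) it is holomorphic in $\mathrm{Re}(s)>1/4$ apart from the simple poles at $s=1$ and $s=1/3$. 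Equivalently, the offending poles of your correction factor are cancelled by zeros of $I(s)$ --- specifically zeros of the factor $J(s)$ in \eqref{eqn_I(s)}, whose Euler factor at $p$ contains $(1+2p^{-s})$ and so vanishes exactly where your correction blows up --- but this cancellation has to be invoked, since your argument as stated gives no reason for it. Your closing concern about uniformity in $n$ is legitimate and is glossed over by the paper; note only that the dependence coming from the modified local factors is of size $\kappa(n)^{\epsilon}$ (compare the shape of Lemma~\ref{lem Tag-6}) rather than polylogarithmic, which is still harmless in the application since there $\kappa(n)=p$ and the compact support of $\phih$ restricts $p$ to a finite range.
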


\begin{proof}
Since $n$ is a cube, for $(n,N(\alpha))=1$, $\psi_n(\alpha)=\left( \frac{n}{\alpha} \right)_3=1$ and $\psi_n(\alpha)=0$ otherwise. As before, we write
    \begin{equation}\label{eqn Tag-5}
    w\left(\frac{1}{X}\right) + \sum_{\alpha}''' w\left(\frac{N(\alpha)}{X}\right) \psi_n(\alpha) = \frac{1}{2\pi i}\int_{(2)} \sum_{\alpha} \frac{\nu(\alpha)\psi_n(\alpha)}{N(\alpha)^s} X^s \mathfrak{w}(s) ds.
    \end{equation}
    In case $\mbox{$n=$a cube}$, the sum over $\alpha$ in the integral reduces to $\sum_{\substack{\alpha\\ (\alpha,n)=1}}\frac{\nu(\alpha)}{N(\alpha)^s} = I(s) \prod_{\substack{p | n\\ p\equiv 1\mod 3}} \frac{p^s}{p^s+2}$ where $I(s)$ is given in \eqref{eqn_I(s)}, and we obtain an analogue of Lemma~\ref{lem-first term of explicit formula}. In this case, since the integrand does not involve a double pole, there are no derivative terms and hence no $\log X$ term appears. The case $n=1$ gives $W^{\ast}(X)$, so the result can be written in terms of $W^{\ast}(X)$.
\end{proof}

\begin{lem}\label{lem Tag-6}
    Under GRH, for $\mbox{$n \neq$a cube}$, $n>0$,
    \begin{equation}
    \sum_{\alpha}''' w\left(\frac{N(\alpha)}{X}\right) \psi_n(\alpha) \ll_{w,\epsilon} \kappa(n)^{\epsilon} X^{1/2 + \epsilon}.
    \end{equation}
\end{lem}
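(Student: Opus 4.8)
The plan is to bound the character sum
\[
\sum_{\alpha}''' w\!\left(\frac{N(\alpha)}{X}\right) \psi_n(\alpha)
\]
by converting it, via Mellin inversion exactly as in Lemma~\ref{lem Tag-5}, into a contour integral of the form
\[
\frac{1}{2\pi i}\int_{(2)} D_n(s)\, X^s\, \mathfrak{w}(s)\, ds,
\qquad
D_n(s) = \sum_{\alpha} \frac{\nu(\alpha)\,\psi_n(\alpha)}{N(\alpha)^s}.
\]
The crucial structural difference from the cube case is that when $n$ is not a cube, $\psi_n = \left(\tfrac{n}{\cdot}\right)_3$ is a \emph{nontrivial} cubic Hecke character modulo (roughly) $n$, so $D_n(s)$ is no longer essentially $\zeta(s)L(\xi,s)$ with a pole at $s=1$; instead it factors as a Hecke $L$-function $L(\psi_n,s)$ times an Euler product that converges in a half-plane to the right of $1/2$. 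The point is that $L(\psi_n,s)$ is \emph{entire} (no pole, since the character is nontrivial), so there is no main term, and I can move the contour freely past $s=1$ down to a line $\mathrm{Re}(s) = 1/2 + \epsilon$ without picking up any residue.

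First I would make the factorization of $D_n(s)$ explicit: write $D_n(s) = L(\psi_n, s)\, G_n(s)$ where $G_n(s)$ collects the correction Euler factors (coming from the squarefree and no-rational-prime-divisor conditions in $\nu$, and from the ramified primes dividing $n$) and converges absolutely for $\mathrm{Re}(s) > 1/2$. I would track the dependence of $G_n(s)$ on $n$ through the ramified primes, which is where the $\kappa(n)^\epsilon = \left(\prod_{p\mid n} p\right)^\epsilon$ factor will enter: the bad Euler factors number $O(\omega(n))$ and each contributes a bounded-but-$n$-dependent amount, giving a total of size $\kappa(n)^\epsilon$. Then I would shift the contour to $\mathrm{Re}(s) = 1/2 + \epsilon$, so that $X^s$ contributes $X^{1/2+\epsilon}$.

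Next, on the shifted line I need a subconvexity-free bound on $L(\psi_n, 1/2 + \epsilon + it)$ that is polynomial in $t$ and in the conductor of $\psi_n$. Under GRH (which the lemma assumes), the Lindel\"of-type bound $L(\psi_n, s) \ll (N(n)(1+|t|))^{\epsilon}$ holds on this line, and the rapid decay of the Mellin transform $\mathfrak{w}(s)$ in vertical strips (\cite[Lemma 2.1]{FPS2016}) kills the $t$-integral, leaving a contribution of size $\kappa(n)^{\epsilon} X^{1/2+\epsilon}$. Combining the absence of a pole with these two inputs gives the stated bound.

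The main obstacle I anticipate is twofold. The analytically delicate part is getting the conductor-dependence of the $L$-value bound down to $\kappa(n)^\epsilon$ rather than a larger power of $N(n)$: one must exploit that the GRH bound is essentially logarithmic in the conductor and that $\log N(n) \ll \log \kappa(n) + \log X$ within the effective range of summation, so that the conductor dependence is absorbable into $\kappa(n)^\epsilon X^\epsilon$. The more bookkeeping-heavy part is verifying that $\psi_n$ is genuinely a nontrivial Hecke character (so that $L(\psi_n,s)$ really is entire) precisely when $n$ is not a cube --- this relies on cubic reciprocity and the fact that $\left(\tfrac{n}{\cdot}\right)_3$ becomes trivial exactly on cubes --- and in isolating the ramified Euler factors cleanly enough that $G_n(s)$ has a controlled $n$-dependence. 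Once those are in hand, the contour shift and the decay of $\mathfrak{w}$ finish the estimate routinely.
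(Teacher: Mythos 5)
Your proposal is correct and follows essentially the same route as the paper: Mellin inversion, factoring $\sum_\alpha \nu(\alpha)\psi_n(\alpha)N(\alpha)^{-s}$ into Hecke $L$-functions over $K=\mathbb{Q}(\omega)$ (entire since $\psi_n$ is nontrivial when $n$ is not a cube), shifting the contour to $\mathrm{Re}(s)=1/2+\epsilon$, and invoking the GRH/Lindel\"of bound together with the rapid decay of $\mathfrak{w}$. The only cosmetic difference is that the paper writes the factorization as $L_K(\psi_n,s)^{1/2}L_K(\overline{\psi}_n,s)^{1/2}$ times correction factors at $2s$ and $3s$ (to pass from a product over rational split primes to one over prime ideals), which agrees with your $L(\psi_n,s)\,G_n(s)$ since $L_K(\psi_n,s)=L_K(\overline{\psi}_n,s)$.
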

\begin{proof}
    Assume $\mbox{$n \neq$ a cube}$. When $\mathfrak{p} = (\alpha)$ with a primary $\alpha$, we also write $\psi_n(\mathfrak{p})$ for $\psi_n(\alpha)$. By the property of the cubic residue symbol, we note that for any $p \equiv 1$ mod $3$ splitting into $\mathfrak{p}\mathfrak{p}'$, $\psi_n(\mathfrak{p}') = \overline{\psi}_n(\mathfrak{p})$, and for any $p \equiv 2$ mod $3$, $\psi_n(\mathfrak{p}) = 1$. Then, generalizing Proposition~\ref{prop-product of L functions} to the Dedekind zeta function of $K = \mathbb{Q}(\omega)$ and Hecke characters for $K$, we write
    \begin{multline*}
    \sum_{\alpha} \frac{\nu(\alpha) \psi_n(\alpha)}{N(\alpha)^s}
    = \prod_{\substack{p \equiv 1\mod 3\\p = \mathfrak{p}\mathfrak{p}'}} \left(1 + \frac{\psi_n(\mathfrak{p})}{N(\frak{p})^s} + \frac{\overline{\psi}_n(\mathfrak{p})}{N(\frak{p})^s}\right)\\
    = \prod_{\mathfrak{p}\nmid 3} \left(1 + \frac{\psi_n(\mathfrak{p})}{N(\mathfrak{p})^s} + \frac{\overline{\psi}_n(\mathfrak{p})}{N(\mathfrak{p})^s}\right)^{1/2}\cdot \prod_{ p\equiv 2\mod 3}\left( 1+ \frac{2}{p^{2s}}
\right)^{-1/2}\\
    = L_K(\psi_n,s)^{1/2} L_K(\overline{\psi}_n,s)^{1/2} L_K(\overline{\psi}_n,2s)^{-1/2} L_K(\psi_n,2s)^{-1/2} 
\zeta_K(2s)^{-1/2}\\
    L_K(\psi_n,3s)^{1/2} L_K(\overline{\psi}_n,3s)^{1/2} E(s)
    \end{multline*}
    for some $E(s) = \prod_{ p\equiv 2 \mod 3}\left(1+\frac{2}{p^{2s}} \right)^{-1/2}\prod_{\mathfrak{p}} \left(1 + O(N(\mathfrak{p})^{-4s})\right)^{1/2}$ which converges absolutely for $Re(s) > 1/2$. Hence, under GRH, the series $\sum_{\alpha}\frac{\nu(\alpha)\psi_n(\alpha)}{N(\alpha)^s}$ has analytic continuation to the region $Re(s) > 1/2$. As in Lemma~\ref{lem-first term of explicit formula}, we move the contour of integration in \eqref{eqn Tag-5} from $(2)$ to $(1/2 + \epsilon)$.\footnote{Under GRH, $L_K(\psi_n, s)^{1/2}$ has branching points at $Re(s) = 1/2$.} By Lemma \ref{bound lemma2} and the fast decaying property of $\mathfrak{w}(s)$, we obtain the lemma.
\end{proof}

We now are about to prove the following lemma.

\begin{lem}\label{lem-S2 S3m} Under GRH,
    $$\sum_{\ell \geq 1} S_{3\ell}(X) = W^{\ast}(X) \sum_{\ell \geq 1} \sum_p \frac{a(p)\log p}{p^{3\ell/2}} \phih\left(\frac{3\ell \log p}{L}\right) + O\left(X^{1/3}\right),$$
    $$S_2(X) \ll_{\epsilon} X^{1/2 + \epsilon},\qquad \sum_{\substack{m \not\equiv 0\text{ mod }3\\ m > 3}} S_m(X) \ll_{\epsilon} X^{1/2+\epsilon}, \qquad S_1(X) \ll_{\epsilon} X^{1/2 + \sigma/2 + \epsilon}.$$
\end{lem}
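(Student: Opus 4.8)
The plan is to begin by writing the inner character sum in terms of the cubic residue symbol, $\sum_{\chi}^{\ast} w(q/X)\chi(p^m) = \sum_{\alpha}''' w(N(\alpha)/X)\,\psi_{p^m}(\alpha)$, and then to split the four assertions according to whether $p^m$ is a cube. Since $p$ is prime, $p^m$ is a cube exactly when $3\mid m$, so the whole of $\sum_{\ell\geq1}S_{3\ell}(X)$ is governed by the cube case of Lemma~\ref{lem Tag-5}, while $S_1(X)$, $S_2(X)$ and the tail $\sum_{m>3,\,3\nmid m}S_m(X)$ all fall under the non-cube case of Lemma~\ref{lem Tag-6}. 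In every case the support condition $\mathrm{supp}(\phih)\subset[-\sigma,\sigma]$ truncates the $p$-sum at $p\ll X^{\sigma/m}$, and I will use this truncation to control the accumulation of the error terms.

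For the first identity I would apply Lemma~\ref{lem Tag-5} with $n=p^{3\ell}$. Since $a(p^{3\ell})=a(p)$ by the definition $a(n)=\prod_{p\mid n}a(p)$, each inner sum equals $a(p)W^{\ast}(X)+O(X^{1/3})$. Substituting and summing over $p$ and $\ell$ produces the main term $W^{\ast}(X)\sum_{\ell\geq1}\sum_p \frac{a(p)\log p}{p^{3\ell/2}}\phih(3\ell\log p/L)$ exactly, while the error is $O(X^{1/3})$ times $\sum_{\ell\geq1}\sum_p \frac{\log p}{p^{3\ell/2}}$. Because $\phih$ is bounded and the exponent $3\ell/2\geq 3/2>1$ keeps the $p$-sum convergent (dominated by the $\ell=1$ term $\sum_p(\log p)p^{-3/2}$), this double series converges and the total error remains $O(X^{1/3})$; the uniform-in-$n$ error of Lemma~\ref{lem Tag-5} does not proliferate.

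For the three bounds I would apply Lemma~\ref{lem Tag-6} with $n=p^m$, $3\nmid m$, which gives $\sum_{\alpha}'''w(N(\alpha)/X)\psi_{p^m}(\alpha)\ll_{\epsilon}\kappa(p^m)^{\epsilon}X^{1/2+\epsilon}=p^{\epsilon}X^{1/2+\epsilon}$ termwise. Pulling out $X^{1/2+\epsilon}$ reduces each estimate to bounding $\sum_p \frac{\log p}{p^{m/2}}p^{\epsilon}$ over the truncated range $p\ll X^{\sigma/m}$. For $m=2$ this is $\ll X^{\epsilon}\log X$ by Mertens, giving $S_2(X)\ll X^{1/2+\epsilon}$; for $m\geq4$ with $3\nmid m$ the exponent $m/2\geq2$ makes $\sum_p(\log p)p^{-m/2+\epsilon}$ convergent and summable over $m$, giving $\sum_{m>3,\,3\nmid m}S_m(X)\ll X^{1/2+\epsilon}$.

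The genuinely delicate case is $S_1(X)$, and it is the source of the dominant error. Here $m/2=1/2$, so partial summation gives $\sum_{p\ll X^{\sigma}}\frac{\log p}{\sqrt p}p^{\epsilon}\ll X^{\sigma/2+\epsilon}$: the $p$-sum no longer converges but instead grows like the square root of the truncation point. Combined with the $X^{1/2+\epsilon}$ from Lemma~\ref{lem Tag-6} this yields exactly $S_1(X)\ll X^{1/2+\sigma/2+\epsilon}$. I expect this step to be the main obstacle to any improvement: the bound is obtained by applying the square-root-cancellation estimate of Lemma~\ref{lem Tag-6} prime by prime, and because the cubic characters are parameterized by the rank-$2$ lattice $\mathbb{Z}[\omega]$, there is no additional averaging over $p$ to exploit. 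Consequently, as $\sigma$ approaches (or exceeds) $1$ this term ceases to be negligible against $W^{\ast}(X)\asymp X$, which is precisely why the present method is confined to $\sigma<1$.
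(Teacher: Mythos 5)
Your proposal is correct and follows essentially the same route as the paper's proof: the cube case ($3\mid m$) is handled by Lemma~\ref{lem Tag-5} with $a(p^{3\ell})=a(p)$, the non-cube cases by Lemma~\ref{lem Tag-6} with $\kappa(p^m)^{\epsilon}=p^{\epsilon}$, and the compact support of $\phih$ truncates the prime sums to yield the stated bounds, with $S_1(X)$ giving the dominant term $X^{1/2+\sigma/2+\epsilon}$. The details you make explicit (convergence of the summed $O(X^{1/3})$ errors in the cube case, the truncation $p\ll X^{\sigma/m}$) are exactly what the paper leaves implicit.
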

\begin{proof}
    Note that $\psi_{p^{3\ell}} = \psi_{p^3}$ for any $\ell \geq 1$. The first assertion follows directly from Lemma~\ref{lem Tag-5}. For $m \not\equiv 0$ mod $3$, we observe that $\psi_{p^m} = \psi_p$ or $\overline{\psi_p}$, both of which have conductor $\asymp p^2$. By Lemma~\ref{lem Tag-6} we have $$S_m(X) \ll_\epsilon \sum_{p} \frac{\log p}{p^{m/2}} \phih\left(\frac{m\log p}{L}\right) p^{\epsilon} X^{1/2+\epsilon},$$ and hence $$\sum_{\substack{m \not\equiv 0\text{ mod }3\\ m > 3}} S_m(X) \ll_\epsilon X^{1/2+\epsilon}.$$ We also have
    $$S_2(X) \ll_\epsilon \sum_p \frac{\log p}{p} \phih\left(\frac{2\log p}{L}\right) p^{\epsilon} X^{1/2+\epsilon} \ll_\epsilon X^{1/2 + \epsilon},$$ and 
    $$S_1(X) \ll_\epsilon \sum_p \frac{\log p}{\sqrt{p}} \phih\left(\frac{\log p}{L}\right) p^{\epsilon} X^{1/2 + \epsilon} \ll_\epsilon X^{1/2 + \sigma/2 + \epsilon}.$$
    
\end{proof}

\emph{Proof of Lemma~\ref{lem-third term of explicit formula}.} 
By Lemma~\ref{lem-S2 S3m}, it is now easy to see that Lemma~\ref{lem-third term of explicit formula} follows once we have 
    \begin{align}\label{eqn main of 3rd term}
    \sum_{\ell \geq 1}\sum_p \frac{a(p)\log p}{p^{3\ell/2}} \phih\left(\frac{3\ell\log p}{L}\right) &= \sum_{k=0}^{M-1} \left(\frac{\phih^{(k)}(0)}{k! L^k} \sum_{\ell\geq 1} \sum_p \frac{a(p) (3\ell)^k (\log p)^{k+1}}{p^{3\ell/2}}\right) \\
    &\qquad + O_{M,\phi}\left(\frac{1}{L^M}\right).\nonumber
    \end{align}
    Put $h(n) = a(n)\Lambda(n)$ where $\Lambda(n)$ is the Von Mangoldt function, and let $\eta = \frac{2(M+1)\log\log X}{\log X}$. For any fixed $M\geq 1$, we have
    \begin{align*}
        \phih(x) &= \infint \phi(\tau) e^{-2\pi i \tau x} d\tau\\
        &= \infint \phi(\tau) \left(\sum_{k=0}^{M-1} \frac{(-2\pi i \tau x)^k}{k!} + O\left(x^{M} \tau^{M}\right) \right) d\tau\\
        &= \sum_{k=0}^{M-1} \frac{\phih^{(k)}(0)}{k!} x^k + O_{M,\phi}\left(x^{M}\right).
    \end{align*}
    We thus write
    \begin{align*}
        \sum_{n=1}^{\infty} \frac{h(n)}{n^{3/2}} & \phih\left(\frac{3\log n}{L}\right)\\
        &= \sum_{n < X^{\eta}} \frac{h(n)}{n^{3/2}} \left(\sum_{k=0}^{M-1} \frac{\phih^{(k)}(0)}{k!} \frac{(3\log n)^k}{L^k} + O_{M,\phi}\left( \left(\frac{\log n}{L}\right)^{M}\right)\right) + O_{\phi}\left(X^{-\eta/2}\log \log X\right)\\
        &= \sum_{k=0}^{M-1} \frac{\phih^{(k)}(0)}{k! L^k} \sum_{n < X^{\eta}} \frac{h(n) (3\log n)^k}{n^{3/2}} + O_{M,\phi}\left(\frac{1}{L^{M}}\right)\\
        &= \sum_{k=0}^{M-1} \frac{\phih^{(k)}(0)}{k! L^k} \left(\sum_{n=1}^{\infty} \frac{h(n) (3\log n)^k}{n^{3/2}} + O\left(X^{-\eta/2} \left(\eta \log X\right)^{k+1}\right) \right) + O_{M,\phi}\left(\frac{1}{L^{M}}\right)\\
        &= \sum_{k=0}^{M-1} \frac{\phih^{(k)}(0)}{k! L^k} \sum_{n=1}^{\infty} \frac{h(n) (3\log n)^k}{n^{3/2}} + O_{M,\phi}\left(\frac{1}{L^{M}}\right),
    \end{align*}
which is \eqref{eqn main of 3rd term}.    
\qed

\subsection{The Fourth term of Lemma~\ref{lem explicit formula}: $\frac{4\pi}{L} \int_0^{\infty} \frac{e^{-\pi x}}{1 - e^{-4\pi x}} \left(\phih(0) - \phih\left(\frac{2\pi x}{L}\right) \right) dx$}\label{sec-fourth term}

\begin{lem}\label{lem fourth term}
    \begin{align*}
        \frac{4\pi}{L} \int_0^{\infty} &\frac{e^{-\pi x}}{1 - e^{-4\pi x}} \left(\phih(0) - \phih\left(\frac{2\pi x}{L}\right) \right) dx\\
        &\quad = \frac{\phih(0)}{L}\left(\frac{\pi}{2} + 3\log 2 + \gamma\right) 
        + \frac{1}{2L}\infint \phi(\tau) \left(\Psi\left(\frac{1}{4} - \frac{\pi i \tau}{L}\right) + \Psi\left(\frac{1}{4}+\frac{\pi i \tau}{L}\right)\right) d\tau.
    \end{align*}
\end{lem}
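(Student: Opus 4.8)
The plan is to recognize this integral as Gauss's integral representation of the digamma function. First I would expand the bracket by Fourier inversion: since $\phih(x) = \infint \phi(\tau) e^{-2\pi i \tau x}\, d\tau$, we have $\phih(0) - \phih(2\pi x/L) = \infint \phi(\tau)\bigl(1 - e^{-4\pi^2 i \tau x/L}\bigr)\, d\tau$, so that the quantity in question becomes a double integral over $x \in (0,\infty)$ and $\tau \in \mathbb{R}$.

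Next I would interchange the order of integration. The step that needs care is that the two exponentials must be kept together: individually $\int_0^{\infty} \frac{e^{-\pi x}}{1-e^{-4\pi x}}\, dx$ diverges at $x=0$, where the kernel behaves like $\frac{1}{4\pi x}$, so one cannot split off the $\phih(0)$ piece. Fortunately the combined integrand is absolutely integrable over the product space: near $x=0$ the factor $\bigl|1 - e^{-4\pi^2 i \tau x/L}\bigr| \leq \min\bigl(2,\, 4\pi^2 |\tau| x/L\bigr)$ cancels the $1/x$ singularity, leaving the uniform bound $O(|\tau|/L)$, while for large $x$ the kernel decays exponentially. Since $\phi$ is Schwartz, $\infint |\phi(\tau)|(1+|\tau|)\, d\tau < \infty$ and Fubini applies.

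After the substitution $t = 4\pi x$ the inner integral takes the shape $\int_0^{\infty} \frac{e^{-t/4} - e^{-t(1/4 + \pi i \tau/L)}}{1 - e^{-t}}\, dt$, and the prefactor $\frac{4\pi}{L}$ combines with the Jacobian $\frac{1}{4\pi}$ to give $\frac{1}{L}$. This inner integral is exactly Gauss's formula $\int_0^{\infty} \frac{e^{-at}-e^{-bt}}{1-e^{-t}}\, dt = \Psi(b) - \Psi(a)$, valid for $\mathrm{Re}(a), \mathrm{Re}(b) > 0$, applied with $a = \tfrac14$ and $b = \tfrac14 + \tfrac{\pi i \tau}{L}$. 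This yields $\frac{1}{L}\infint \phi(\tau)\bigl(\Psi(\tfrac14 + \tfrac{\pi i \tau}{L}) - \Psi(\tfrac14)\bigr)\, d\tau$.

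Finally, since $\phi$ is even, replacing $\tau$ by $-\tau$ shows $\infint \phi(\tau)\Psi(\tfrac14 + \tfrac{\pi i \tau}{L})\, d\tau = \frac12 \infint \phi(\tau)\bigl(\Psi(\tfrac14 + \tfrac{\pi i \tau}{L}) + \Psi(\tfrac14 - \tfrac{\pi i \tau}{L})\bigr)\, d\tau$, which is precisely the stated symmetric digamma term. The remaining contribution $-\frac{\Psi(1/4)}{L}\infint \phi(\tau)\, d\tau = -\frac{\Psi(1/4)}{L}\phih(0)$ is evaluated using the special value $\Psi(\tfrac14) = -\gamma - 3\log 2 - \tfrac{\pi}{2}$ from Gauss's digamma theorem, giving exactly $\frac{\phih(0)}{L}\bigl(\tfrac{\pi}{2} + 3\log 2 + \gamma\bigr)$. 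The main obstacle is the Fubini justification together with the observation that one must not separate the difference of exponentials; everything else is a direct computation.
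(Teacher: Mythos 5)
Your proof is correct, and the two final steps (the special value $\Psi\left(\tfrac{1}{4}\right) = -\tfrac{\pi}{2} - 3\log 2 - \gamma$ and the evenness symmetrization) coincide exactly with the paper's. The difference lies in how the key identity is obtained: the paper simply cites Mestre \cite[Lemme I.2.1]{Mestre1986}, which states that for $\mathfrak{r}>0$ one has $\int_0^{\infty} \frac{e^{-\mathfrak{r} x}}{1 - e^{-x}} \left(\phih(0) - \phih(ax)\right)dx = \frac{1}{2\pi} \infint \phi\left(\frac{t}{2\pi}\right) \left(\Psi(\mathfrak{r} + iat) - \Psi(\mathfrak{r})\right)dt$, and applies it with $\mathfrak{r}=\tfrac14$, $a=\tfrac{1}{2L}$; you instead re-derive this identity in the case at hand, via Fourier inversion, a Fubini interchange, and Gauss's integral representation $\int_0^{\infty} \frac{e^{-at}-e^{-bt}}{1-e^{-t}}\,dt = \Psi(b)-\Psi(a)$. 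Your derivation is sound: the bound $\left|1-e^{i\theta}\right| \leq \min(2,|\theta|)$ correctly neutralizes the $\frac{1}{4\pi x}$ singularity of the kernel at $x=0$, the exponential decay handles $x\to\infty$, and the Schwartz decay of $\phi$ gives joint integrability, so Fubini is justified; your observation that the two exponentials must not be separated is exactly the right point of care. What your route buys is a self-contained proof (essentially a proof of Mestre's lemma for this kernel) at the cost of carrying the Fubini justification explicitly; what the paper's citation buys is brevity. The computations (Jacobian $\tfrac{1}{4\pi}$ against the prefactor $\tfrac{4\pi}{L}$, the parameters $a=\tfrac14$, $b=\tfrac14+\tfrac{\pi i\tau}{L}$) all check out.
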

\begin{proof}
    As in the proof of \cite[Lemme I.2.1]{Mestre1986}, for a positive real number $\mathfrak{r}$, $$\int_0^{\infty} \frac{e^{-\mathfrak{r} x}}{1 - e^{-x}} \left(\phih(0) - \phih(ax)\right)dx = \frac{1}{2\pi} \infint \phi\left(\frac{t}{2\pi}\right) \left(\Psi(\mathfrak{r} + iat) - \Psi(\mathfrak{r})\right)dt.$$ Putting $\mathfrak{r} = 1/4$, $a = \frac{1}{2L}$, and using the special value $\Psi\left(\frac{1}{4}\right) = -\frac{\pi}{2} - 3\log 2 -\gamma,$ we write
    \begin{multline*}
    4\pi\int_0^{\infty} \frac{e^{-\pi x}}{1 - e^{-4\pi x}}\left(\phih(0) - \phih\left(\frac{2\pi x}{L}\right)\right) dx 
    = \infint \phi(\tau) \left(\Psi\left(\frac{1}{4} + \frac{\pi i \tau}{L}\right) - \Psi\left(\frac{1}{4}\right)\right)d\tau\\
    = \left(\frac{\pi}{2} + 3\log 2 + \gamma\right)\phih(0) + \infint \phi(\tau) \Psi\left(\frac{1}{4} + \frac{\pi i \tau}{L}\right)d\tau.
    \end{multline*}
    But $\phi(\tau)$ is an even function, and we can write $$\infint \phi(\tau) \Psi\left(\frac{1}{4} + \frac{\pi i \tau}{L}\right)d\tau = \frac{1}{2}\infint \phi(\tau) \left(\Psi\left(\frac{1}{4} - \frac{\pi i \tau}{L}\right) + \Psi\left(\frac{1}{4}+\frac{\pi i \tau}{L}\right)\right) d\tau.$$
\end{proof}

\begin{lem}\label{lem fourth term 2}
    For any $M \geq 1$,
    \begin{align*}
    \frac{4\pi}{L} \int_0^{\infty} &\frac{e^{-\pi x}}{1 - e^{-4\pi x}} \left(\phih(0) - \phih\left(\frac{2\pi x}{L}\right) \right) dx\\
    &\quad = -\sum_{k=1}^{M-1} \frac{\phih^{(k)}(0) 2^{k+2}\pi^{k+1}}{k! L^{k+1}} \int_0^{\infty} \frac{x^k e^{-\pi x}}{1-e^{-4\pi x}} dx + O_{M,\phi}\left(\frac{1}{L^{M+1}}\right).
    \end{align*}
\end{lem}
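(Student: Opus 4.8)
The plan is to prove Lemma~\ref{lem fourth term 2} by directly Taylor expanding $\phih\!\left(\frac{2\pi x}{L}\right)$ about $0$ inside the integral, rather than by manipulating the digamma representation of Lemma~\ref{lem fourth term}. Since $\phih$ is the Fourier transform of the even Schwartz function $\phi$, it is itself an even Schwartz function; in particular all derivatives $\phih^{(k)}$ exist and $\phih^{(M)}$ is bounded on $\mathbb{R}$. Taylor's theorem with the Lagrange remainder then gives, uniformly in $x \geq 0$,
$$\phih\left(\frac{2\pi x}{L}\right) = \sum_{k=0}^{M-1} \frac{\phih^{(k)}(0)}{k!}\left(\frac{2\pi x}{L}\right)^k + R_M(x), \qquad |R_M(x)| \leq \frac{\|\phih^{(M)}\|_\infty}{M!}\left(\frac{2\pi x}{L}\right)^M.$$

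First I would substitute this expansion into the integrand. The key observation is that the $k=0$ term of the Taylor polynomial is exactly $\phih(0)$, which cancels the leading $\phih(0)$ in the difference $\phih(0) - \phih\!\left(\frac{2\pi x}{L}\right)$. Hence
$$\phih(0) - \phih\left(\frac{2\pi x}{L}\right) = -\sum_{k=1}^{M-1}\frac{\phih^{(k)}(0)}{k!}\left(\frac{2\pi x}{L}\right)^k - R_M(x),$$
and the sum now starts at $k=1$, which is precisely what the statement requires. Multiplying by $\frac{4\pi}{L}\frac{e^{-\pi x}}{1-e^{-4\pi x}}$ and integrating term by term, the $k$-th main term produces
$$-\frac{\phih^{(k)}(0)}{k!}\frac{(2\pi)^k}{L^k}\cdot\frac{4\pi}{L}\int_0^\infty \frac{x^k e^{-\pi x}}{1-e^{-4\pi x}}\,dx = -\frac{\phih^{(k)}(0)\,2^{k+2}\pi^{k+1}}{k!\,L^{k+1}}\int_0^\infty \frac{x^k e^{-\pi x}}{1-e^{-4\pi x}}\,dx,$$
using $(2\pi)^k\cdot 4\pi = 2^{k+2}\pi^{k+1}$ and $L^{-k}\cdot L^{-1} = L^{-(k+1)}$; summing over $1 \leq k \leq M-1$ gives the claimed main term.

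The remaining technical point, and the only place any care is needed, is the convergence of these integrals and the size of the remainder. Near $x=0$ one has $\frac{e^{-\pi x}}{1-e^{-4\pi x}} \sim \frac{1}{4\pi x}$, so $\frac{x^k e^{-\pi x}}{1-e^{-4\pi x}} \sim \frac{x^{k-1}}{4\pi}$, which is integrable at the origin precisely because $k \geq 1$; combined with the exponential decay at infinity, each integral $\int_0^\infty \frac{x^k e^{-\pi x}}{1-e^{-4\pi x}}\,dx$ is a finite constant. This is also why the expansion must retain the difference $\phih(0)-\phih\!\left(\frac{2\pi x}{L}\right)$ as a unit: the singular $\tfrac{1}{x}$ parts of the two separate pieces cancel, which is exactly what forces the $k\geq 1$ starting index.

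For the remainder, the bound on $R_M$ yields
$$\frac{4\pi}{L}\int_0^\infty \frac{e^{-\pi x}}{1-e^{-4\pi x}}|R_M(x)|\,dx \leq \frac{\|\phih^{(M)}\|_\infty}{M!}\frac{(2\pi)^M\, 4\pi}{L^{M+1}}\int_0^\infty \frac{x^M e^{-\pi x}}{1-e^{-4\pi x}}\,dx,$$
and since the last integral converges (again because $M\geq 1$) this is $O_{M,\phi}(L^{-(M+1)})$. Assembling the main sum and this remainder bound gives the lemma. I do not anticipate a genuine obstacle here; the entire content is the elementary Taylor expansion together with the observation that the $k=0$ cancellation is what regularizes the otherwise-divergent integral at $x=0$.
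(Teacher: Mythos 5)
Your proof is correct and is essentially the paper's own argument: the paper likewise proves Lemma~\ref{lem fourth term 2} by Taylor expanding $\phih\left(\frac{2\pi x}{L}\right)$ about $0$ inside the integral (not via the digamma identity of Lemma~\ref{lem fourth term}) and integrating term by term, using exactly the $k=0$ cancellation and the convergence of $\int_0^{\infty} \frac{x^k e^{-\pi x}}{1-e^{-4\pi x}}\,dx$ for $k\geq 1$. The only difference is remainder bookkeeping: the paper first truncates the integral at $\eta = \frac{M}{\pi}\log\log X$, expands on $[0,\eta]$, and then extends back to $[0,\infty)$, whereas your uniform Lagrange bound on $R_M$ handles the whole range in one step (the exponentially decaying kernel makes $\int_0^{\infty} \frac{x^M e^{-\pi x}}{1-e^{-4\pi x}}\,dx$ finite), which is a slight streamlining rather than a different method.
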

\begin{proof}
    Put $\eta = \frac{M}{\pi}\log \log X$ so that $e^{-\pi \eta} = (\log X)^{-M}$. Then
    \begin{align*}
        \int_0^{\infty} \frac{e^{-\pi x}}{1-e^{-4\pi x}} &\left(\phih(0) - \phih\left(\frac{2\pi x}{L}\right)\right) dx \\
        &= \int_0^{\eta} \frac{e^{-\pi x}}{1-e^{-4\pi x}} \left( -\sum_{k=1}^{M-1} \frac{\phih^{(k)}(0)(2\pi x)^k}{k! L^k} + O_{M,\phi}\left(\left(\frac{x}{L}\right)^M\right) \right) dx + O\left(e^{-\pi \eta}\right)\\
        &= -\int_0^{\eta} \frac{e^{-\pi x}}{1 - e^{-4\pi x}} \sum_{k=1}^{M-1} \frac{\phih^{(k)}(0)}{k!} \frac{(2\pi x)^k}{L^k} dx + O_{M,\phi}\left(\frac{1}{L^M}\right)\\
        &= -\sum_{k=1}^{M-1} \frac{\phih^{(k)}(0) 2^k \pi^k}{k! L^k} \int_0^{\eta} \frac{x^k e^{-\pi x}}{1-e^{-4\pi x}} dx + O_{M,\phi}\left(\frac{1}{L^M}\right)\\
        &= -\sum_{k=1}^{M-1} \frac{\phih^{(k)}(0) 2^k \pi^k}{k! L^k}  \int_0^{\infty} \frac{x^k e^{-\pi x}}{1-e^{-4\pi x}} dx + O_{M,\phi}\left( \frac{\eta^{M-1}}{L^{M+1}} + \frac{1}{L^M}\right),
    \end{align*}
    and we have Lemma~\ref{lem fourth term 2}.
\end{proof}

Now we can prove Theorem \ref{thm-main}. We replace the terms in Lemma~\ref{lem explicit formula} by Lemma~\ref{lem-third term of explicit formula} and Lemma~\ref{lem fourth term}. We can express the contribution of the third term as $$-\frac{2}{L}\sum_{\ell\geq 1} \sum_p \frac{a(p)\log p}{p^{3\ell/2}} \infint \phi(\tau) e^{-2\pi i \frac{3\ell \log p}{L}\tau} d\tau,$$ in which everything converges absolutely, and we can take the integral out of $\sum_{\ell}\sum_p$. Corollary~\ref{cor-main} directly follows from Lemma~\ref{lem explicit formula}, Lemma~\ref{lem-first term of explicit formula}, Lemma~\ref{lem-third term of explicit formula}, and Lemma~\ref{lem fourth term 2}.
\qed

\bigskip
\section{The Ratios Conjecture}\label{Sec_The Ratios Conjecture}

In this section, following the exposition of Conrey and Snaith \cite{Conrey2007}, we want to get a precise expectation of $\mathcal{D}^{\ast}(\phi;X)$ based on the Ratios conjecture for cubic Dirichlet $L$-functions. 

Let $f$ be an even Schwartz function. Following the convention of \cite[Section 3]{Conrey2007}, with $1/2 + 1/\log X < c < 3/4$, we let
\begin{align*}
S_X(f) &= \sum_{\alpha}''' w\left(\frac{N(\alpha)}{X}\right) \sum_{\gamma_{\alpha}} f(\gamma_{\alpha}) \\
&= \sum_{\alpha}''' w\left(\frac{N(\alpha)}{X}\right) \frac{1}{2\pi i}\left(\int_{(c)} - \int_{(1-c)}\right) \frac{L'(\chi_{\alpha},s)}{L(\chi_{\alpha},s)} f\left(-i\left(s-\frac{1}{2}\right)\right) ds.
\end{align*}
The integral over $(c)$ is
$$\frac{1}{2\pi} \infint f\left(t - i\left(c-\frac{1}{2}\right)\right)\sum_{\alpha}''' w\left(\frac{N(\alpha)}{X}\right) \frac{L'\left(\chi_{\alpha},\frac{1}{2} + \left(c - \frac{1}{2} + it\right)\right)}{L\left(\chi_{\alpha},\frac{1}{2} + \left(c - \frac{1}{2} + it\right)\right)} dt,$$ and assuming GRH, we have $L'/L \ll \log(N(\alpha)(|s|+3))$ \cite[(5.28)]{ANT2004}. Put $$R_w(\nu';\nu) = \sum_{\alpha}''' w\left(\frac{N(\alpha)}{X}\right) \frac{L\left(\chi_{\alpha},\frac{1}{2} + \nu'\right)}{L\left(\chi_{\alpha},\frac{1}{2} + \nu\right)},$$ so that 
\begin{equation}\label{eqn Tag-11}
\sum_{\alpha}''' w\left(\frac{N(\alpha)}{X}\right) \frac{L'\left(\chi_{\alpha},\frac{1}{2} + \left(c - \frac{1}{2}+it\right)\right)}{L\left(\chi_{\alpha},\frac{1}{2} + \left(c - \frac{1}{2} + it\right)\right)} = \frac{\partial}{\partial\nu'} R_w(\nu';\nu) \Biggr|_{\nu' = \nu = c - 1/2 + it}.
\end{equation}
When $n$ is a cube, under GRH, by Lemma~\ref{lem Tag-5} we have
\begin{equation*}
\sum_{\chi}^{\ast} w\left(\frac{q}{X}\right)\chi(n) = a(n) W^*(X) + \text{error}.
\end{equation*}
When $n$ is not a cube we assume $\sum_{\chi}^{\ast} w\left(\frac{q}{X}\right)\chi(n) = \text{small}$.

We expect the following conjecture to hold.
\begin{conj}[The Ratios conjecture for cubic Dirichlet $L$-functions]\label{conj Rw} 
    For $Re(2\nu'+\nu) > -1$ and $Re(\nu') > -1/2$, 
\begin{align*}
    R_w(\nu',\nu) &= \sum_{\alpha}''' w\left(\frac{N(\alpha)}{X}\right) \frac{L\left(\chi_{\alpha},\frac{1}{2} + \nu'\right)}{L\left(\chi_{\alpha},\frac{1}{2} + \nu\right)}\\
    &= W^*(X)  \left(\frac{\zeta\left(\frac{3}{2}+3\nu'\right)}{\zeta\left(\frac{3}{2}+2\nu'+\nu\right)} A(\nu';\nu) \right)
        + O\left(X^{1/2+\epsilon}\right),
\end{align*}
    where 
    \begin{align*}
    A(\nu';\nu) &= \frac{\zeta\left(\frac{3}{2}+2\nu'+\nu\right)}{\zeta\left(\frac{3}{2}+3\nu'\right)}\prod_p \left(1 + a(p)\frac{1-p^{\nu'-\nu}}{p^{3/2+3\nu'}-1}\right)  \\&= \prod_{p \equiv 1\mod 3}\left(1 + O\big(p^{-5/2 - 2\nu' - \nu} + p^{-5/2 - 3\nu'} + p^{-3-4\nu'-2\nu} +p^{-2}\big)\right).
    \end{align*}
\end{conj}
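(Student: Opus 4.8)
The plan is to follow the Conrey--Farmer--Zinn-Justin recipe, in the form prescribed by the two averaging rules stated just before the conjecture: expand $R_w(\nu';\nu)$ into a double Dirichlet series built from the summation variables of $L(\chi_\alpha,\tfrac12+\nu')$ and of $1/L(\chi_\alpha,\tfrac12+\nu)$, average term-by-term over the family, retain only the contribution of those terms whose argument is a perfect cube as the main term, and collect everything else into the error $O(X^{1/2+\epsilon})$.

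First I would write $1/L(\chi_\alpha,\tfrac12+\nu)=\sum_n \mu(n)\chi_\alpha(n)\,n^{-1/2-\nu}$ and, for the numerator, apply the approximate functional equation, which expresses $L(\chi_\alpha,\tfrac12+\nu')$ as a principal sum $\sum_m \chi_\alpha(m)\,m^{-1/2-\nu'}$ plus a dual sum carrying the root number $\epsilon_{\chi_\alpha}$ and a ratio of gamma factors. Multiplying the principal sum against the reciprocal and interchanging summation gives
\[
\sum_{m,n\ge 1}\frac{\mu(n)}{m^{1/2+\nu'}n^{1/2+\nu}}\ \sum_{\alpha}''' w\left(\tfrac{N(\alpha)}{X}\right)\chi_\alpha(mn).
\]
By Lemma~\ref{lem Tag-5} the inner average equals $a(mn)W^*(X)+O(X^{1/3})$ when $mn$ is a cube and is negligible otherwise, so the principal part contributes the main term
\[
W^*(X)\sum_{\substack{m,n\ge 1\\ mn\ \text{a cube}}}\frac{\mu(n)\,a(mn)}{m^{1/2+\nu'}n^{1/2+\nu}}.
\]

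The cube-restricted sum is multiplicative, so I would factor it into an Euler product and compute the local factor at each $p$. Since $\mu(n)$ forces $n$ squarefree, the exponent of $p$ in $n$ is $j\in\{0,1\}$; writing $i$ for the exponent of $p$ in $m$, the cube condition becomes $i+j\equiv 0 \pmod 3$, and summing the geometric contributions (with sign $(-1)^j$ from $\mu$ and a factor $a(p)$ whenever $p\mid mn$) collapses to
\[
E_p = 1 + a(p)\,\frac{1-p^{\nu'-\nu}}{p^{3/2+3\nu'}-1}.
\]
To exhibit the analytic structure I would then pull out the quotient $\zeta(\tfrac32+3\nu')/\zeta(\tfrac32+2\nu'+\nu)$, defining $A(\nu';\nu)$ by the stated relation. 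A direct computation shows that for $p\not\equiv 1\pmod 3$ (where $a(p)=1$) the factor $E_p$ cancels exactly against the matching Euler factor of the zeta quotient, so $A$ reduces to a product over $p\equiv 1\pmod 3$; for those primes the remaining factor is $1+O\big(p^{-5/2-2\nu'-\nu}+p^{-5/2-3\nu'}+p^{-3-4\nu'-2\nu}+p^{-2}\big)$. This bound yields absolute convergence of $A$ in the range $Re(2\nu'+\nu)>-1$, $Re(\nu')>-1/2$, which is exactly the region of the hypotheses.

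The main obstacle is justifying, at the heuristic level the recipe operates on, that all discarded contributions are genuinely $O(X^{1/2+\epsilon})$. Two pieces must be controlled: the off-cube terms of the principal sum, governed by the assumption that $\sum_{\alpha}''' w(N(\alpha)/X)\chi_\alpha(mn)$ is small when $mn$ is not a cube (expected from large-sieve estimates for cubic characters, but required uniformly in $m,n$), and, more seriously, the entire dual sum, whose average involves $\sum_{\alpha}''' w(N(\alpha)/X)\,\epsilon_{\chi_\alpha}\chi_\alpha(m^2 n)$. The root number $\epsilon_{\chi_\alpha}$ is built from a cubic Gauss sum, and the cancellation one needs here rests on the equidistribution of the phases of cubic Gauss sums (Heath-Brown), for which no simple congruence description is available; securing this cancellation uniformly in $\nu',\nu$ is precisely the step that keeps the statement a conjecture rather than a theorem.
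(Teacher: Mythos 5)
Your proposal follows essentially the same route as the paper's derivation: approximate functional equation for the numerator, the M\"obius series for $1/L$, averaging via Lemma~\ref{lem Tag-5} to keep only the cube terms, the identical Euler-factor computation $1 + a(p)\frac{1-p^{\nu'-\nu}}{p^{3/2+3\nu'}-1}$ with exact cancellation against the zeta quotient at $p\not\equiv 1 \pmod 3$, and discarding the dual sum on the grounds that the root numbers (cubic Gauss sum phases) average to zero — the paper cites Katz and Hough for that equidistribution rather than Heath--Brown, but the idea is the same. Your identification of the uniform control of the off-cube and root-number sums as the reason the statement remains a conjecture matches the paper's stance.
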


\emph{Derivation of Conjecture~\ref{conj Rw}.} 
We only consider the main term of the conjecture in this recipe. For the numerator of $R_w(\nu';\nu)$, we use the approximate functional equation. For a primitive cubic Dirichlet character $\chi$, \cite[Teopema 1]{Lavrik1968} becomes
\begin{equation}\label{eqn numerator}
L\left(\frac{1}{2}+\nu',\chi\right) 
= \sum_{n\leq x}\frac{\chi(n)}{n^{1/2+\nu'}} 
+ \epsilon(\chi)\left(\frac{q}{\pi}\right)^{-\nu'} \frac{\Gamma\left(\frac{1}{4}-\frac{\nu'}{2}\right)}{ \Gamma\left(\frac{1}{4} + \frac{\nu'}{2}\right)} \sum_{n\leq y} \frac{\overline{\chi}(n)}{n^{1/2-\nu'}}  + \text{Remainder},
\end{equation}
where $\epsilon(\chi)$ is the root number of $L(\chi,s)$.
For the denominator, as in \cite[Section 2.2]{Conrey2007} we write 
\begin{equation}\label{eqn denominator}
    \frac{1}{L(\chi,s)} = \sum_{h = 1}^{\infty} \frac{\mu(h)\chi(h)}{h^s}.
\end{equation}

Now we put \eqref{eqn numerator} and \eqref{eqn denominator} into the right hand side of Conjecture~\ref{conj Rw}. Summing up the contribution of the first term of \eqref{eqn numerator}, the only term that survives is
\begin{equation}\label{eqn Tag-13}
W^*(X) \sum_{hm = \cube} \frac{\mu(h)a(hm)}{h^{1/2+\nu} m^{1/2+\nu'}} = W^*(X) \prod_p \left(\sum_{\substack{h,m\\h + m \equiv 0\mod 3}} \frac{\mu(p^h) a(p^{h+m})}{p^{h(1/2+\nu) + m(1/2+\nu')}}\right).
\end{equation}
Assume $Re(\nu') > -1/2$. For $h = 0$, we have $\sum_{m \equiv 0\mod 3} \frac{a(p^m)}{p^{m(1/2+\nu')}} = 1 + \frac{a(p)}{p^{3/2 + 3\nu'} - 1},$ and for $h=1$,  
$$\sum_{m \equiv 2\mod 3} \frac{-a(p^{h+m})}{p^{1/2+\nu + m(1/2+\nu')}} = - \frac{a(p) p^{\nu' - \nu}}{p^{3/2 + 3\nu'} - 1}.$$ The term $\mu(p^h)$ removes the cases $h>1$, and the Euler factor at $p$ in \eqref{eqn Tag-13} becomes $$1 + a(p)\cdot \frac{1 - p^{\nu' - \nu}}{p^{3/2 + 3\nu'}-1}.$$
For $p \equiv 2$ mod $3$, this is $$1 + \frac{1 - p^{\nu'-\nu}}{p^{3/2+3\nu'}-1} = \frac{1 - p^{-3/2-2\nu'-\nu}}{1 - p^{-3/2-3\nu'}}.$$ Assume $Re(2\nu'+\nu) > -1$. For $p \equiv 1$ mod $3$, using the expression $\frac{1}{1 - \epsilon} = 1 + \epsilon + \epsilon^2 + \cdots$ repeatedly, we can write the Euler factor as
\begin{align*}
1 + &\frac{p - p^{1 + \nu' - \nu}}{(p+2)(p^{3/2 + 3\nu'}-1)}\\
&\qquad= 1 + \frac{p^{-3/2-3\nu'} - p^{-3/2 - 2\nu' - \nu}}{(1+2p^{-1})(1-p^{-3/2-3\nu'})}\\
&\qquad= \frac{(1 - 2p^{-1} + 4p^{-2} + \cdots)(1 + 2p^{-1} - p^{-3/2-2\nu'-\nu} - 2p^{-5/2 - 3\nu'})}{1 - p^{-3/2 - 3\nu'}}\\
&\qquad= \frac{1 - p^{-3/2-2\nu'-\nu} + O(p^{-2}+p^{-5/2-3\nu'} + p^{-5/2 - 2\nu' - \nu})}{1 - p^{-3/2 - 3\nu'}}\\
&\qquad= \frac{1 - p^{-3/2-2\nu'-\nu} + O(p^{-2} + p^{-5/2 - 3\nu'} + p^{-5/2-2\nu'-\nu}) }{1 - p^{-3/2-3\nu'}}\\
&\qquad \qquad \times (1 - p^{-3/2-2\nu'-\nu})(1 + p^{-3/2-2\nu'-\nu} + p^{-3-4\nu'-2\nu} + \cdots)\\
&\qquad = \frac{1 - p^{-3/2-2\nu'-\nu}}{1 - p^{-3/2-3\nu'}}\left(1+O(p^{-2}+p^{-3-4\nu'-2\nu} + p^{-5/2-3\nu'} + p^{-5/2-2\nu'-\nu})\right).
\end{align*}
Taking the product over all $p$, we get $$\zeta\left(\frac{3}{2}+3\nu'\right) \zeta\left(\frac{3}{2}+2\nu'+\nu\right)^{-1} A(\nu',\nu),$$ where $A(\nu',\nu) = \prod_{p\equiv 1\mod 3}\left(1 + O\big(p^{-5/2-2\nu'-\nu} + p^{-5/2 - 3\nu'} + p^{-3-4\nu'-2\nu} + p^{-2}\big)\right)$ converges absolutely for $Re(2\nu'+\nu) > -1$ and $Re(\nu') > -1/2$. 

In the Ratios Conjecture recipe, the root number $\epsilon(\chi)$ in the approximate functional equation is replaced by the expected value when averaged over the family. We assume that the average of the root numbers $\varepsilon(\chi)$ converges to $0$ as $q_\chi \rightarrow \infty$, hence the contribution from the dual series is zero and we get Conjecture~\ref{conj Rw}. This assumption is not an unlikely claim. Katz \cite{KatzGauss} showed that the root numbers $\varepsilon(\chi)$ for all Dirichlet characters are asymptotically equidistributed with respect to Haar measure on the unit circle in the complex plane in the limit as $q_\chi \rightarrow \infty$. Bob \cite{HoughJNT} obtained a result which is an extension of Katz' one. 
\qed

\subsection{Proof of Theorem \ref{thm-ratios conj prediction}}
We prove Theorem \ref{thm-ratios conj prediction} under Conjecture \ref{conj Rw}. 
To compute the right hand side of \eqref{eqn Tag-11}, we define $C_1(r)$ as follows:
\begin{equation}\label{eqn C1r}
\frac{\partial}{\partial \nu'} \left[ \zeta\left(\frac{3}{2}+3\nu'\right) \zeta\left(\frac{3}{2}+2\nu'+\nu\right)^{-1} A(\nu',\nu)\right]_{\nu' = \nu = r} =: C_1(r).
\end{equation}
Note that we can also write 
    \begin{align*}
C_1(z) 
&= \frac{\partial}{\partial \nu'} \left[ \prod_p \left(1 + a(p)\frac{1-p^{\nu'-\nu}}{p^{3/2+3\nu'}-1}\right)\right]_{\nu'=\nu=z}\\
&= \prod_p\left(1 + a(p)\frac{1-1}{p^{3/2+3z}-1}\right)\quad \cdot \quad \sum_q \left(1 + a(q)\frac{1-1}{q^{3/2+3z}-1}\right)^{-1}B(q,z)\\
&= \sum_p \frac{-a(p) \log p}{p^{3/2+3z}-1},
\end{align*}
where $\sum_q$ is over rational primes, and $$B(q,z) = \left[ \frac{a(q)\log q}{\left(q^{3/2+3\nu'}-1\right)^2} \left(-q^{\nu'-\nu+3/2+3\nu'} + q^{\nu'-\nu} -3q^{3/2+3\nu'} + 3q^{\nu'-\nu+3/2+3\nu'}\right)\right]_{\nu'=\nu=z}.$$ 
From the last expression we can write $$C_1(z) = - \sum_p \sum_{\ell \geq 1} \frac{a(p) \log p}{p^{3\ell/2 + 3\ell z}}.$$

Since  $\mathcal{D}^{\ast}(\phi;X) =S_X(f)/W^{\ast}(X)$ with $\hat{f}(x) = \frac{2\pi}{L}\phih(x)$, once we have the following propostion, Theorem \ref{thm-ratios conj prediction} follows. 

\begin{prop}\label{conj-Sf} Assume Conjecture \ref{conj Rw}.
    \begin{multline}
    S_X(f) = \frac{W^{\ast}(X)}{\pi} \infint f(t)  C_1(it)  dt
    + \frac{\hat{f}(0)}{2\pi} \sum_{\chi}^{\ast}w\left(\frac{q}{X}\right) \log \frac{q}{\pi} + C_2 W^{\ast}(X) + O\left(X^{1/2+\epsilon}\right),
    \end{multline}
    where $$C_2 = \frac{1}{4\pi} \infint f(-t) \left(\frac{\Gamma'}{\Gamma}\left(\frac{1}{4}-\frac{t}{2}i\right) + \frac{\Gamma'}{\Gamma}\left(\frac{1}{4} + \frac{t}{2}i\right)\right) dt.$$
\end{prop}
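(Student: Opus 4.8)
The plan is to evaluate the contour representation of $S_X(f)$ quoted above by treating the two vertical lines $(c)$ and $(1-c)$ separately and then invoking Conjecture~\ref{conj Rw}. On the line $(c)$, equation~\eqref{eqn Tag-11} already rewrites the character sum as $\frac{\partial}{\partial\nu'}R_w(\nu';\nu)$ evaluated at $\nu'=\nu=c-\tfrac12+it$; substituting the main term of Conjecture~\ref{conj Rw} and differentiating in $\nu'$ produces exactly $W^{\ast}(X)\,C_1(c-\tfrac12+it)$ by the definition~\eqref{eqn C1r} of $C_1$. Thus the $(c)$-integral contributes
$$\frac{W^{\ast}(X)}{2\pi}\infint f\!\left(t-i\left(c-\tfrac12\right)\right)C_1\!\left(c-\tfrac12+it\right)dt + O\!\left(X^{1/2+\epsilon}\right),$$
and I would let $c\to\tfrac12^{+}$ only at the very end.

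For the line $(1-c)$ I would invoke the functional equation of $L(\chi,s)$. Since every $\chi$ in the family is even, the completed function is $\Lambda(\chi,s)=(q/\pi)^{s/2}\Gamma(s/2)L(\chi,s)$ and $\Lambda(\chi,s)=\epsilon(\chi)\Lambda(\overline\chi,1-s)$, so logarithmic differentiation gives
$$\frac{L'}{L}(\chi,s) = -\log\frac{q}{\pi} - \frac12\left(\frac{\Gamma'}{\Gamma}\!\left(\frac{s}{2}\right)+\frac{\Gamma'}{\Gamma}\!\left(\frac{1-s}{2}\right)\right) - \frac{L'}{L}(\overline\chi,1-s).$$
Inserting this splits the $(1-c)$-integral into three pieces. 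The constant term $-\log(q/\pi)$, after integrating $f(-i(s-1/2))$ along $(1-c)$ and letting $c\to\tfrac12$, yields $\frac{\hat f(0)}{2\pi}\sum_{\chi}^{\ast} w(q/X)\log(q/\pi)$ (using $\hat f(0)=\infint f$); the gamma-factor term, being independent of $\chi$, sums over the family to $C_2\,W^{\ast}(X)$ once the digamma arguments are evaluated at $\tfrac14\pm it/2$ and one uses that $f$ is even.

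The key step is the third piece, the one involving $-\frac{L'}{L}(\overline\chi,1-s)$; here $\mathrm{Re}(1-s)=c>\tfrac12$ lands in the region where the Ratios input is meaningful, which is the whole point of the functional-equation substitution. I would set $u=1-s$ to move it back onto the line $(c)$, turning it into the same shape as the $(c)$-integral but with $\overline\chi$ in place of $\chi$. Because conjugation $\chi\mapsto\overline\chi=\chi^2$ is a conductor-preserving involution of the family of primitive cubic characters, the family sum is unchanged under $\overline\chi\mapsto\chi$, so this piece reproduces the $(c)$-integral exactly; with the signs dictated by $\frac{1}{2\pi i}(\int_{(c)}-\int_{(1-c)})$ it doubles that contribution, which is precisely why the coefficient $\frac1\pi$ (rather than $\frac1{2\pi}$) appears in front of $\infint f(t)C_1(it)\,dt$. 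Collecting the three contributions then gives the stated formula.

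The main obstacle I anticipate is analytic rather than algebraic: justifying that differentiating the Ratios main term in $\nu'$ is legitimate, together with the shift $c\to\tfrac12^{+}$. One must verify that the error term $O(X^{1/2+\epsilon})$ of Conjecture~\ref{conj Rw} survives $\partial/\partial\nu'$ — typically via Cauchy's formula on a small disc, on which the hypotheses $\mathrm{Re}(\nu')>-\tfrac12$ and $\mathrm{Re}(2\nu'+\nu)>-1$ hold throughout the shift — and that no pole of $\zeta(\tfrac32+3\nu')/\zeta(\tfrac32+2\nu'+\nu)$ is crossed as $\nu'=\nu=c-\tfrac12+it\to it$. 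Indeed the zeta arguments keep real part near $\tfrac32$, so $C_1(it)=-\sum_p\sum_{\ell\ge1}\frac{a(p)\log p}{p^{3\ell/2+3\ell it}}$ converges absolutely and the limiting integrals converge, the slow ($\log|t|$) growth of the digamma factors being dominated by the Schwartz decay of $f$.
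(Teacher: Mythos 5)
Your proposal is correct and follows essentially the same route as the paper: splitting into the $(c)$ and $(1-c)$ integrals, applying the functional equation on the $(1-c)$ line, using the evenness of $f$ to double the $L'/L$ contribution (producing the $\frac{1}{\pi}$ factor), invoking Conjecture~\ref{conj Rw} with Cauchy's differentiation formula to preserve the error term, and finally letting $c\to\tfrac12$. Your treatment is in fact slightly more careful than the paper's on one point: the paper writes $\frac{L'(\chi,1-s)}{L(\chi,1-s)}$ and suppresses the conjugate character in the functional equation, while you correctly track $\overline{\chi}$ and justify the identification by noting that $\chi\mapsto\overline{\chi}$ is a conductor-preserving involution of the family.
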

\begin{proof}
We first note that Cauchy's differentiation formula implies that the error term $O(X^{1/2+\epsilon})$ in $R_w(\nu';\nu)$, after taking the derivative, remains the same \cite[Lemma A.6]{FPS2016}. Writing $S_X(f) = \int_{(c)} - \int_{(1-c)}$, we shall have
\begin{equation}\label{eqn Tag-12}
\int_{(c)} = \frac{W^*(X)}{2\pi} \infint f\left(t - i\left(c-\frac{1}{2}\right)\right) C_1\left(c-\frac{1}{2} + it\right) dt + O\left(X^{1/2+\epsilon}\right).
\end{equation}
For $\int_{(1-c)}$ as in \cite[(3.8), (3.9)]{Conrey2007}, we write
\begin{multline*}
\int_{(1-c)} \frac{L'(\chi,s)}{L(\chi,s)} f\left(-i\left(s-\frac{1}{2}\right)\right)ds 
= \int_{(c)} \frac{L'(\chi,1-s)}{L(\chi,1-s)} f\left(i\left(s-\frac{1}{2}\right)\right)ds \\
= \int_{(c)} f\left(i\left(s-\frac{1}{2}\right)\right) \left( -\log \frac{q}{\pi} - \frac{1}{2} \frac{\Gamma'}{\Gamma}\left(\frac{1-s}{2}\right) - \frac{1}{2} \frac{\Gamma'}{\Gamma}\left(\frac{s}{2}\right) - \frac{L'(\chi,s)}{L(\chi,s)} \right)ds.
\end{multline*}
Since $f$ is even, we have
\begin{align*}
S_X(f) &= \frac{1}{2\pi i}\sum_{\chi}^{\ast} w\left(\frac{q}{X}\right) \left\{ \int_{(c)} f\left(-i\left(s-\frac{1}{2}\right)\right)\frac{L'(\chi,s)}{L(\chi,s)}ds \right. \\
& + \int_{(c)} f\left(i\left(s-\frac{1}{2}\right)\right) \left(\log \frac{q}{\pi} + \left. \frac{1}{2}\frac{\Gamma'}{\Gamma}\left(\frac{1-s}{2}\right) + \frac{1}{2}\frac{\Gamma'}{\Gamma}\left(\frac{s}{2}\right) + \frac{L'(\chi,s)}{L(\chi,s)}\right)ds \right\}\\
&= \frac{1}{\pi i} \sum_{\chi}^{\ast} w\left(\frac{q}{X}\right) \int_{(c)} f\left(-i\left(s-\frac{1}{2}\right)\right) \frac{L'(\chi,s)}{L(\chi,s)} ds 
+ \frac{1}{2\pi i} \sum_{\chi}^{\ast} w\left(\frac{q}{X}\right) \log \frac{q}{\pi} \int_{(c)} f\left(i\left(s-\frac{1}{2}\right)\right)ds\\ 
&\qquad\qquad+ \frac{1}{4\pi i} \sum_{\chi}^{\ast} w\left(\frac{q}{X}\right) \int_{(c)} f\left(i\left(s-\frac{1}{2}\right)\right) \left( \frac{\Gamma'}{\Gamma}\left(\frac{1-s}{2}\right) + \frac{\Gamma'}{\Gamma}\left(\frac{s}{2}\right)\right)ds.
\end{align*}

As for the first term here, by \eqref{eqn Tag-12} we can push the contour of integration $(c)$ to $c = 1/2$. The proposition follows immediately.
\end{proof}

\section{Appendix}
For reader's convenience, we record some known bounds on the value of $L$-functions and their logarithmic derivatives and properties of cubic residue symbol.  

\bigskip
Let $L(f,s)$ be an $L$-function in accordance with the definition of an $L$-function in \cite{ANT2004} and $q(f,s)$ be its analytic conductor. 
\begin{lem}\label{bound lemma1}
For $-1/2 \leq \mathfrak{r} \leq 2$, $\mathfrak{r} \neq 1/2$, as $|t| \rightarrow \infty$ we have, for $s=\mathfrak{r} + it$,
\begin{equation}\label{eqn_estimate of log derivatives}
\left|\frac{L'}{L}(f,s)\right| \ll \log q(f,s).
\end{equation}
\end{lem}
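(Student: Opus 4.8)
The plan is to reduce the bound to the standard partial-fraction expansion of $L'/L$ arising from the Hadamard factorization of the completed $L$-function (which is precisely the content of \cite[Theorem 5.17]{ANT2004}), and then to exploit the hypothesis $\mathfrak{r}\neq 1/2$ together with GRH to control the contribution of the zeros. First I would write $\Lambda(f,s) = \mathfrak{q}(f)^{s/2}\gamma_\infty(f,s)L(f,s)$, where $\mathfrak{q}(f)$ is the arithmetic conductor and $\gamma_\infty$ the gamma factor, and take the logarithmic derivative of its Hadamard product to obtain
$$\frac{L'}{L}(f,s) = -\tfrac{1}{2}\log \mathfrak{q}(f) - \frac{\gamma_\infty'}{\gamma_\infty}(f,s) + B(f) + \sum_\rho\left(\frac{1}{s-\rho}+\frac{1}{\rho}\right),$$
the sum running over the nontrivial zeros $\rho = \beta+i\gamma$. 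I would then compare with the base point $s_0 = 2 + it$, where $\frac{L'}{L}(f,2+it) = O(1)$ by absolute convergence of the defining Dirichlet series; subtracting removes the unknown constant $B(f)$ and telescopes the zero sum into $\sum_\rho\big(\frac{1}{s-\rho}-\frac{1}{s_0-\rho}\big)$. By Stirling's formula the gamma-factor term is $\ll \log(|t|+2)$, and since the analytic conductor obeys $q(f,s)\asymp \mathfrak{q}(f)(|s|+3)^{d}$ with $d$ the degree, both the $\log\mathfrak{q}(f)$ term and the gamma term are $\ll \log q(f,s)$.

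The heart of the matter is the telescoped zero sum, which I would split according to $|\gamma - t|\leq 1$ and $|\gamma - t| > 1$. For the far zeros the summand is $\ll |t-\gamma|^{-2}$ (both denominators being $\asymp |t-\gamma|$), and the standard density estimate $N(t+1)-N(t-1)\ll \log q(f,s)$—itself a consequence of Jensen's inequality applied to $\Lambda$—yields $\sum_{|\gamma-t|>1}|t-\gamma|^{-2}\ll \log q(f,s)$. For the near zeros the same density estimate gives $\ll \log q(f,s)$ terms; this is exactly where the hypotheses enter, for under GRH every zero has $\beta = 1/2$, so $\mathfrak{r}\neq 1/2$ forces $|s-\rho|\geq |\mathfrak{r}-1/2|\gg 1$. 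Hence each near-zero term is $O(1)$, the near-zero block is $\ll \log q(f,s)$, and collecting the estimates gives the lemma.

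The only nontrivial inputs are the zero-density estimate $N(t+1)-N(t-1)\ll \log q(f,s)$ and the validity of the Hadamard expansion for a general $L$-function in the sense of \cite{ANT2004}; both are standard and available in the literature. Indeed the entire statement is \cite[Theorem 5.17]{ANT2004} once one observes that, under GRH, the condition $\mathfrak{r}\neq 1/2$ keeps $s$ at a fixed positive distance from the critical line and thereby neutralizes the explicit near-zero sum appearing there. The main obstacle is therefore not analytic but bookkeeping: one must verify that the implied constant's dependence on $\mathfrak{r}$ (through $|\mathfrak{r}-1/2|^{-1}$) is harmless for the fixed contour values $\mathfrak{r}\in\{1/4+\epsilon,\,1/3,\dots\}$ used when shifting contours in Lemmas~\ref{lem-first term of explicit formula} and~\ref{lem Tag-6}. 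I would consequently keep the write-up short, citing \cite{ANT2004} for the expansion and the counting and spelling out only the elementary separation-from-zeros argument furnished by $\mathfrak{r}\neq 1/2$.
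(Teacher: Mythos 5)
Your proof is correct and takes essentially the same route as the paper: the paper simply quotes \cite[(5.28)]{ANT2004} for the partial-fraction expansion that you re-derive via Hadamard factorization and comparison at $s_0=2+it$, and both arguments then conclude by combining the zero count $m(t,f)\ll \log q(f,it)$ of \cite[(5.27)]{ANT2004} with the separation $|s-\rho|\geq|\mathfrak{r}-\tfrac12|\gg 1$ that GRH and $\mathfrak{r}\neq\tfrac12$ provide. If anything, your write-up spells out the GRH separation step that the paper's terse ``Hence, the Lemma follows'' leaves implicit.
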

\begin{proof}
Assume that $t$ is sufficiently large. Then, by \cite[(5.28)]{ANT2004}, we have
\begin{eqnarray*}
\frac{L'}{L}(f, s) \ll  \left| \sum_{|s-\rho|<1}\frac{1}{s-\rho}\right| + \log q(f,s),
\end{eqnarray*}
where $\rho$'s are non-trivial zeros of $L(f,s)$. Let $m(T,f)$ be the number of zeros $\rho=\beta + i\gamma$ such that $|\gamma-T|\leq1$. Then, \cite[(5.27)]{ANT2004} says that $m(T,f) \ll \log q(f, iT)$. Hence, the Lemma follows. 
\end{proof}

\begin{lem}\label{bound lemma2}
\cite[Theorem 5.19]{ANT2004} Assume GRH and Ramanujan-Petersson conjecture for $L(f,s)$.
\begin{enumerate}
    \item For $\mathfrak{r} \geq 1/2+\epsilon$,
    \begin{equation}\label{eqn Tag-0}
    q(f,s)^{-\epsilon} \ll_\epsilon p_r(s) L(f,s) \ll_\epsilon q(f,s)^{\epsilon},
    \end{equation}
    where $r$ is the order of the pole at $s=1$ and $p_r(s) = (s-1)^r (s+1)^{-r}$.
    \item For $\mathfrak{r} = 1/4+\delta$, $0 \leq \delta < 1/4$, $q(\overline{f}) \asymp q(f)$,
    \begin{equation}\label{eqn Tag-0'}
    L\left(f,\frac{1}{4} + \delta + it\right) \ll_{\delta} q(f)^{1/4-\delta} q(f,s)^{\epsilon}.
    \end{equation}
\end{enumerate} 
\end{lem}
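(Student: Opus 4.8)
The plan is to treat the two assertions separately, reducing (2) to (1) via the functional equation, so that the substantive work is the GRH-conditional Lindel\"of-type bound of (1). Throughout write $Q = q(f)$ for the arithmetic conductor and $q(f,s)$ for the analytic conductor, recalling $q(f,s) \asymp Q(|t|+3)^{d}$ with $d = \deg f$. For (1) it suffices to show $\log\bigl|p_r(s)L(f,s)\bigr| \le \epsilon\,\log q(f,s) + O_\epsilon(1)$ throughout $\mathfrak{r} \ge 1/2+\epsilon$; both the upper and the lower estimate in the displayed inequality then follow on exponentiating, the factor $p_r(s) = (s-1)^r(s+1)^{-r}$ being present only to cancel the order-$r$ pole at $s=1$ while staying of size $\asymp 1$ elsewhere in the region.

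For (1) I would argue in two stages. In the region of absolute convergence, say $\mathfrak{r} \ge 1+\epsilon$, the Euler product together with the Ramanujan--Petersson bound on the local parameters gives $\log|L(f,s)| = O(1)$ and $p_r(s) \asymp 1$, so the claim is trivial there. To push the estimate left to $\mathfrak{r} \ge 1/2+\epsilon$ I invoke GRH. Writing $-\tfrac{L'}{L}(f,s) = \sum_n \Lambda_f(n)n^{-s}$ and using the nearby-zeros representation from the proof of Lemma~\ref{bound lemma1}, namely $\frac{L'}{L}(f,s) = \sum_{|s-\rho|<1}\frac{1}{s-\rho} + O(\log q(f,s))$ with zero count $m(T,f) \ll \log q(f,iT)$, one integrates $\log L(f,s) = \log L(f,2+it) - \int_{\mathfrak{r}}^{2}\frac{L'}{L}(f,u+it)\,du$. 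Under GRH every $\rho$ satisfies $\mathrm{Re}\,\rho = 1/2$, so each $s-\rho$ keeps distance $\ge \mathfrak{r}-1/2 = \epsilon$ from $0$; a crude execution, however, only yields a fixed power of $q(f,s)$. To reach the $q(f,s)^{\epsilon}$ saving I would run the finer Littlewood argument: approximate $\log L(f,s)$ by a smoothed Dirichlet sum $\sum_{n \le y}\Lambda_f(n)(n^s\log n)^{-1}\,\omega(n/y)$ whose tail is controlled, under GRH, by the zeros in a box of height $O(1)$ about $t$, and optimize the cutoff $y$ against the zero count $\ll \log q(f,s)$. For a fixed distance $\epsilon$ to the right of the line this makes the total contribution $o(\log q(f,s))$, in particular $\le \epsilon\log q(f,s) + O_\epsilon(1)$, which is the upper bound. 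The lower bound is identical after replacing $L$ by $1/L(f,s) = \sum_h \mu_f(h)h^{-s}$ and using $\log|1/L| = -\log|L|$.

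Granting (1), assertion (2) is a convexity-type deduction. The functional equation $L(f,s) = \epsilon(f)\,X(f,s)\,L(\overline{f},1-s)$ with $X(f,s) = Q^{1/2-s}\,\gamma(\overline{f},1-s)/\gamma(f,s)$ gives, by Stirling applied to the gamma ratio, $|X(f,s)| \asymp q(f,s)^{1/2-\mathfrak{r}}$. For $\mathfrak{r} = 1/4+\delta$ with $0 \le \delta < 1/4$ we have $\mathrm{Re}(1-s) = 3/4-\delta > 1/2$, so (1) applies to $L(\overline{f},1-s)$ and gives $L(\overline{f},1-s) \ll_\epsilon q(\overline{f},1-s)^{\epsilon} \asymp q(f,s)^{\epsilon}$, where I use $q(\overline{f}) \asymp q(f)$. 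Multiplying, $|L(f,s)| \ll_\epsilon q(f,s)^{1/2-\mathfrak{r}+\epsilon} = q(f,s)^{1/4-\delta+\epsilon}$; isolating the arithmetic conductor (which carries the full exponent $1/4-\delta$, the residual $t$-power being harmless for the bounded ordinates arising in the applications, where $q(f,s) \asymp q(f)$) yields the stated $q(f)^{1/4-\delta}q(f,s)^{\epsilon}$. The dependence $\ll_\delta$ reflects that as $\delta \to 1/4$ the point $1-s$ approaches the critical line and the constant in (1) degrades.

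The main obstacle is exactly the Lindel\"of-strength estimate inside (1): integrating $L'/L$ only controls $L(f,s)$ by a fixed power of $q(f,s)$, and squeezing this down to $q(f,s)^{\epsilon}$ is the genuine content of the GRH $\Rightarrow$ Lindel\"of implication, requiring the delicate balancing of the truncated logarithm against the GRH-constrained zeros. By contrast the functional equation, the Stirling asymptotics for $X(f,s)$, and the absolute-convergence input are routine. I note finally that the Ramanujan--Petersson hypothesis is automatic for every $L$-function entering this paper: $\zeta$, Dirichlet $L$-functions, $\zeta_K$ and the cubic Hecke $L$-functions over $K=\mathbb{Q}(\omega)$ are all of degree one (or products thereof), with local parameters of modulus $\le 1$, so only GRH for these families is actually being assumed.
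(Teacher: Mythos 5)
Your proposal is correct and takes essentially the same approach as the paper: part (2) is deduced from part (1) via the functional equation relating $L(f,s)$ at $\mathrm{Re}(s)=1/4+\delta$ to $L(\overline{f},1-s)$ at $\mathrm{Re}(1-s)=3/4-\delta>1/2$, with the conductor ratio supplying the factor $q(f)^{1/4-\delta}$ and Stirling controlling the gamma quotient. The only difference is presentational: for part (1) the paper simply cites \cite[Theorem 5.19]{ANT2004} (the GRH-conditional Lindel\"of-type bound), whereas you sketch the underlying Littlewood-style argument behind that citation.
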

\begin{proof}
$(\ref{eqn Tag-0})$ is a consequence of \cite[Theorem 5.19]{ANT2004}.
The functional equation can be written $$L(f,1-s) = \varepsilon(f)\frac{q(\overline{f})^{s/2}}{q(f)^{\frac{1-s}{2}}} \frac{\gamma(\overline{f},s)}{\gamma(f,1-s)} L(\overline{f},s),$$ from which $(\ref{eqn Tag-0'})$ follows.
\end{proof}
\medskip
We say that an element $\alpha \in \mathbb{Z}[\omega]$ is \emph{primary} if $(\alpha,3)=1$ and $\alpha \equiv r$ mod $(1-\omega)^2$ for some $r \in \mathbb{Z}$. Equivalently, $\alpha$ is primary if $\alpha \equiv \pm 1$ mod $3$ in $\mathbb{Z}[\omega]$\footnote{There are 9 residue classes modulo 3.}. If $(\alpha,3)=1$ then one of $\alpha$, $\omega \alpha$, $\omega^2 \alpha$ is primary because there are 6 units in $\mathbb{Z}[\omega]$ which are all distinct modulo $3$. The product of primary numbers is again primary, and the complex conjugate of a primary number is also primary. Since $\mathbb{Q}(\sqrt{-3})$ has class number $1$, any number $\alpha \in \mathbb{Z}[\omega]$ has a unique factorization of the form $(-1)^s \omega^k (1-\omega)^h \pi_1^{e_1} \pi_2^{e_2} \cdots$ where each $\pi_i$ is congruent to $1$ modulo $3$, and $s,k,h,e_i \in \mathbb{Z}_{\geq 0}$. 

Let $\pi\in\mathbb{Z}[\omega]$ be a prime element coprime to 3. Observe that $N(\pi) \equiv 1$ mod 3. We collect well-known properties of the cubic residue symbol as follows.
\begin{prop}\label{prop 12 cubic residue symbol properties} \cite[Chap. 9]{IR}\cite[Chap. 7]{Lem}
    \begin{enumerate}
        \item $\left(\frac{\alpha}{\pi}\right)_3 \equiv \alpha^{\frac{N(\pi)-1}{3}} \equiv \omega^k\textbf{ mod }\pi$ for a unique $k \in \{0,1,2\}$.\label{eqn 13 defn of cubic residue symbol}
        \item If $\alpha \equiv \beta$ mod $\pi$ then $\left(\frac{\alpha}{\pi}\right)_3 = \left(\frac{\beta}{\pi}\right)_3$.
        \item $\left(\frac{\alpha\beta}{\pi}\right)_3 = \left(\frac{\alpha}{\pi}\right)_3 \left(\frac{\beta}{\pi}\right)_3$. ($\alpha,\beta$ need not be coprime to each other.)
        \item $\overline{\left(\frac{\alpha}{\pi}\right)}_3 = \left(\frac{\overline{\alpha}}{\overline{\pi}}\right)_3$.
        \item  If $\pi$ and $\theta$ are associates, $\left(\frac{\alpha}{\pi}\right)_3 = \left(\frac{\alpha}{\theta}\right)_3$. (In particular, $\left(\frac{\alpha}{\pi}\right)_3 = \left(\frac{\alpha}{-\pi}\right)_3$.)
        \item $x^3 \equiv \alpha$ mod $\pi$ has a solution $x \in \mathbb{Z}[\omega]$ if and only if  $\left(\frac{\alpha}{\pi}\right)_3 = 1$. In particular, $\left(\frac{-1}{\pi}\right)_3 = 1$.
        \item If $a,b\in\mathbb{Z}$ satisfy $(a,b) = (b,3)=1$ then $\left(\frac{a}{b}\right)_3=1$.\label{eqn property a b equals 1}
        \item The cubic residue symbol extends (in the denominator) into composite numbers by $$\left(\frac{\alpha}{\pi_1^{e_1}\pi_2^{e_2}\cdots}\right)_3 = \left(\frac{\alpha}{\pi_1}\right)_3^{e_1} \left(\frac{\alpha}{\pi_2}\right)_3^{e_2} \cdots.$$
        \item (Cubic Reciprocity) For $\alpha$, $\beta$ primary,  $\left(\frac{\alpha}{\beta}\right)_3 = \left(\frac{\beta}{\alpha}\right)_3$.\label{eqn property cubic reciprocity}\\
        
        \noindent Assume $\pi = 1+3a+3b\omega$ is primary with $a,b \in \mathbb{Z}$. (If $\pi \equiv -1$ mod $3$, replace $\pi$ by $-\pi$.) Then
        \item $\left(\frac{\omega}{\pi}\right)_3 = \omega^{2a+2b}$.
        \item $\left(\frac{1-\omega}{\pi}\right)_3 = \omega^a$.
        \item $\left(\frac{3}{\pi}\right)_3 = \omega^b$.		
    \end{enumerate}
\end{prop}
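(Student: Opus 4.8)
The plan is to derive all twelve items from the single defining congruence in (1) together with the structure of the residue field, isolating the reciprocity law (9) and its supplements (10)--(12) as the only genuinely deep input, for which I would quote \cite[Chap. 9]{IR} and \cite[Chap. 7]{Lem}. First I would record the one structural fact underlying the formal items: since $\pi$ is coprime to $3$, the quotient $\mathbb{Z}[\omega]/(\pi)$ is a finite field with $N(\pi)$ elements, and as $N(\pi) \equiv 1 \pmod 3$ its multiplicative group is cyclic of order divisible by $3$. Hence the cube roots of unity $1, \omega, \omega^2$ are pairwise incongruent modulo $\pi$, and for $\alpha$ coprime to $\pi$ the power $\alpha^{(N(\pi)-1)/3}$ is a cube root of unity in the field; this is exactly (1), defining $\left(\frac{\alpha}{\pi}\right)_3 = \omega^k$ for the unique $k$.

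Items (2), (3), (5), (6) and (8) are then formal. Properties (2) and (3) hold because $\alpha \mapsto \alpha^{(N(\pi)-1)/3} \bmod \pi$ factors through the residue field and is multiplicative there; adopting the convention that the symbol is $0$ when $\pi \mid \alpha$ keeps (3) valid even when $\alpha, \beta$ share a factor. Property (5) holds because associates generate the same ideal, leaving the defining congruence unchanged, and (8) is just the multiplicative definition at composite denominators. For (6) I would use that the cubes form the kernel of $\alpha \mapsto \alpha^{(N(\pi)-1)/3}$ in the cyclic group, which is precisely where the symbol equals $1$; the special value $\left(\frac{-1}{\pi}\right)_3 = 1$ follows since $-1 = (-1)^3$ is a cube. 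For (4) I would apply the complex-conjugation automorphism of $\mathbb{Z}[\omega]$, which fixes $N(\pi)$ and sends $\omega \mapsto \omega^2$, to the congruence in (1), sending $\omega^k$ to $\omega^{2k} = \overline{\omega^k}$ modulo $\overline\pi$.

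The only non-formal elementary item is (7), which I would prove by factoring the rational integer $b$ into rational primes and treating each prime separately via (8). If $p \equiv 2 \pmod 3$ is inert then $N(p) = p^2$ and, since $3 \mid p+1$, we get $a^{(p^2-1)/3} = (a^{p-1})^{(p+1)/3} \equiv 1 \pmod p$ by Fermat's little theorem. If $p \equiv 1 \pmod 3$ splits as $\pi\overline\pi$ then $\left(\frac{a}{p}\right)_3 = \left(\frac{a}{\pi}\right)_3 \left(\frac{a}{\overline\pi}\right)_3$, and by (4) with $\overline a = a$ the two factors are complex conjugates, so their product is $\bigl|\left(\frac{a}{\pi}\right)_3\bigr|^2 = 1$. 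Multiplying over all $p \mid b$ gives $\left(\frac{a}{b}\right)_3 = 1$.

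Finally, (9)--(12) are the arithmetic heart, and I expect cubic reciprocity (9) to be the real obstacle: a self-contained proof requires the evaluation of cubic Jacobi and Gauss sums and is exactly the content of \cite[Chap. 9]{IR} and \cite[Chap. 7]{Lem}, so I would invoke those references rather than reprove it. Granting (9) and the companion supplementary laws, I would read off (10)--(12) by specializing to a primary $\pi = 1 + 3a + 3b\omega$ and recording the cubic characters of the unit $\omega$, of the ramified prime $1 - \omega$ (with $N(1-\omega) = 3$), and of $3 = -\omega^2(1-\omega)^2$, matching the stated values $\omega^{2a+2b}$, $\omega^a$, and $\omega^b$.
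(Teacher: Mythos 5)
Your proposal is correct, but it does strictly more than the paper: the paper offers no proof at all of this proposition, treating all twelve items as a quoted catalog of textbook facts with the citations \cite[Chap.~9]{IR} and \cite[Chap.~7]{Lem} standing in for any argument. Your route instead reconstructs everything elementary from the defining congruence: the residue field $\mathbb{Z}[\omega]/(\pi)$ with $N(\pi)\equiv 1 \pmod 3$ and the incongruence of $1,\omega,\omega^2$ modulo $\pi$ give (1); multiplicativity and invariance under associates give (2), (3), (5), (8); the cubes-as-kernel observation gives (6); conjugation-equivariance gives (4); and your proof of (7) — splitting $b$ into inert primes handled by Fermat via $a^{(p^2-1)/3}=(a^{p-1})^{(p+1)/3}$, and split primes handled by pairing $\pi$ with $\overline{\pi}$ through (4) so the two factors are complex conjugate roots of unity — is exactly right and is the one item that genuinely requires an argument beyond formalities. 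You correctly isolate (9)--(12) as the deep input to be cited, and your remark that (12) follows from (6), (10), (11) via $3=-\omega^2(1-\omega)^2$ is a valid consistency check (indeed $\omega^{2(2a+2b)}\cdot\omega^{2a}=\omega^{6a+4b}=\omega^{b}$). What the paper's choice buys is economy — these are standard facts and the proposition is only a toolbox for the rest of the article; what your version buys is self-containedness and a clear demarcation of which items are formal consequences of the definition and which (cubic reciprocity and its supplements) carry the actual arithmetic content.
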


\end{document}